\theoremstyle{plain}
\newtheorem{theorem}{Theorem}[section]
\newtheorem{corollary}[theorem]{Corollary}
\newtheorem{lemma}[theorem]{Lemma}
\newtheorem{proposition}[theorem]{Proposition}
\newtheorem{remark}[theorem]{Remark}
\newtheorem{definition}[theorem]{Definition}
\newtheorem{example}[theorem]{Example}
\def \co*{{\text{c}_{\text{o}}^*}}
\def \D*{$\Delta_{1/2}$-condition}
\newcommand{\be}{\begin{equation}\label}
\newcommand{\ee}{\end{equation}}
\newcommand{\bq}{\begin{equation*}}
\newcommand{\eq}{\end{equation*}}
\newcommand{\ba}{\begin{align*}}
\newcommand{\ea}{\end{align*}}
\newcommand{\bp}{\begin{proof}}
\newcommand{\ep}{\end{proof}}
\newcommand{\bL}{\begin{lemma}\label}
\newcommand{\eL}{\end{lemma}}
\newcommand{\bP}{\begin{proposition}\label}
\newcommand{\eP}{\end{proposition}}
\newcommand{\bC}{\begin{corollary}\label}
\newcommand{\eC}{\end{corollary}}
\newcommand{\bT}{\begin{theorem}\label}
\newcommand{\eT}{\end{theorem}}
\newcommand{\bR}{\begin{remark}\label}
\newcommand{\eR}{\end{remark}}
\newcommand{\bD}{\begin{definition}\label}
\newcommand{\eD}{\end{definition}}
\begin{document}

\title{Second Order Arithmetic Means \linebreak
  in Operator Ideals}
\author{Victor Kaftal}
\address{University of Cincinnati\\
          Department of Mathematics\\
          Cincinnati, OH, 45221-0025\\
          USA}
\email{victor.kaftal@math.uc.edu}
\author[Gary Weiss]{Gary Weiss*}\thanks{*Both authors were partially supported by grants from The Charles Phelps Taft Research Center.}
\email{gary.weiss@math.uc.edu}

\keywords{operator ideals, principal ideals, s-numbers, arithmetic means, cancellation}
\subjclass{Primary: 47B10, 47L20; Secondary: 46A45, 46B45, 47B47}

\begin{abstract}
We settle in the negative the question arising from \cite{DFWW} on 
whether equality of the second order arithmetic means of two 
principal ideals implies equality of their first order arithmetic 
means (second order equality cancellation) and we provide 
fairly broad sufficient conditions on one of the principal ideals for this 
implication to always hold true. We present also sufficient 
conditions for second order inclusion cancellations. These conditions 
are formulated in terms of the growth properties of the ratio of 
regularity sequence associated to the sequence of s-number of a 
generator of the principal ideal. These results are then extended to 
general ideals.
\end{abstract}

\maketitle


\section{\leftline{\bf Introduction}}\label{S: 1}
Operators ideals, the two-sided ideals of the algebra $B(H)$ of bounded linear operators on a separable Hilbert space $H$, 
have played an important role in operator theory and operator algebras since they were first studied by J. Calkin \cite {jC41} in 1941. 
One of the recurring themes in this subject, from the early years on, was the study of commutator spaces, also called commutator ideals. 
The introduction of cyclic cohomology in the early 1980's by A. Connes (e.g., see \cite {aC94}) and the connection with algebraic K-theory 
by M. Wodzicki in the 1990's provided a powerful motivation for further work on operator ideals and commutator spaces. 
This work culminated in \cite{DFWW} with the full characterization of commutator spaces  in terms of arithmetic mean operations on ideals.

Arithmetic means were first connected, albeit only implicitly, to operator ideals in the study of the commutator space of the trace class in \cite{gW75} 
(see also \cite {gW80}, \cite  {gW86}) and then explicitly in \cite{nK89}. 
In \cite{DFWW}, arithmetic means provided a full characterization of the commutator space of arbitrary ideals
 and led to the introduction of a number of arithmetic mean ideals derived from a given ideal $I$, among which $I_a$  (see next section for the formal definitions). 
For instance, an ideal $I$ coincides with its commutator space (equivalently, it supports no nonzero trace) if and only if it is arithmetically mean stable, that is, $I=I_a$  \cite[Theorem 5.6]{DFWW}.
The study of the properties of $I_a$ and of other related arithmetic mean ideals and the application of these properties to operator 
ideals are the focus  of a program  by the authors of this paper which was announced in \cite{vKgW02} and includes 
\cite{vKgW04-Traces}-\cite{vKgW04-Majorization} and this paper.

The operation $I \to I_a$ is inclusion preserving and the  arithmetic 
mean cancellation properties for inclusion and equality are deeply 
linked to the structure of operator ideals. For instance in 
\cite{vKgW04-Density} we characterized when an ideal $I$ has the 
following first order arithmetic mean inclusion and equality 
cancellation properties:

\begin{itemize}
\item[(i$'$)] $J_a \subset I_a \Rightarrow J \subset I$ if and only if 
$I$ is am-closed (\cite[Lemma 6.1(C)]{vKgW04-Density})
\item[(ii$'$)] $J_a \supset I_a \Rightarrow J \supset I$ if and only if 
$I = \widehat I$ (\cite[Definition 6.10]{vKgW04-Density} and 
preceding remark)
\item[(iii$'$)] If $I$ is principal, then $J_a = I_a \Rightarrow J = I$ 
if and only if $I$ is am-stable (\cite[Theorem 6.7(A)]{vKgW04-Density}) \\
\end{itemize}

For the definitions, notations and preliminary properties, see 
Section \ref{S: 2}.  Notice first that although the terminology we 
use here is new and due to \cite{DFWW}, the inclusion $J_a \subset 
I_a $ in the case where both $I$ and $J$ are principal ideals has a 
time honored history: it reduces to the (infinite) majorization of 
the s-number sequences of the generators (up to a normalization 
scalar) $\eta \prec \xi$ in the sense that $\sum_1^n\eta_j \le 
\sum_1^n\xi_j$ for all $n$, namely $\eta_a \le \xi_a$ (see  for 
instance \cite{HLP52}, \cite {GK69}, and \cite {MO79}).

If $I$ is principal, then conditions in (i$'$) ($I$ is am-closed) and 
in (iii$'$) ($I$ is am-stable) are both equivalent to the regularity of 
the sequence $\eta$ of the s-numbers of a generator of $I$, while the 
condition in (ii$'$) is strictly stronger and is derived from  the 
construction of a new principal arithmetic mean ideal  $\widehat I$ 
(see Section 2). Notice  that the first order equality cancellation 
property (iii$'$) is stated directly for principal ideals. Indeed, 
am-stability is not a sufficient condition for (iii$'$) even in the 
case when $I$ is countably generated (\cite [Example 
5.5]{vKgW04-Density}) nor do we  know of any natural necessary and 
sufficient condition for the general ideal case.

Second order arithmetic mean cancellations are considerably more complex 
even for principal ideals and are the focus of this paper. The 
questions we address here are: which conditions on an ideal $I$ 
guarantee that the following second order arithmetic mean inclusion 
cancellations and equality cancellation hold?
\begin{itemize}
\item[(i)] $J_{a^2} \subset I_{a^2} \Rightarrow J_a \subset I_a$
\item[(ii)] $J_{a^2} \supset I_{a^2} \Rightarrow J_a \supset I_a$
\item[(iii)] $J_{a^2} = I_{a^2} \Rightarrow J_a = I_a$\end{itemize}

The first natural ``test" question, which was posed by  M. Wodzicki, 
arising from work in \cite{DFWW}, is whether equality cancellation 
(iii) holds automatically for all pairs of principal ideals. 
Reformulated in terms of the s-number sequences $\xi$ and $\eta$ of 
the generators of the two principal ideals, the question asks whether 
the equivalence $\xi_{a^2}\asymp \eta_{a^2}$ of the sequences of the second order arithmetic means 
  always implies the equivalence $\xi_a \asymp \eta_a$ of the first order means.

The answer to this question is negative and is presented in Example 
\ref {E: Example 6}, one of the main results in this paper. The 
intuition behind the construction of this example led to the notion 
(Definition \ref{D: r}) of the \textit{ratio of regularity} sequence 
$r(\xi): = \frac{\xi_a}{\xi}$ for a nonincreasing sequence $\xi \in \text{c}_{\text{o}}$  
and then, indirectly, to the other results in this paper.

The second natural question is whether, at least when $I$ is 
principal, the arithmetic stability of $I$ (namely, the regularity of 
the sequence $\eta$ of s-numbers of a generator of $I$) might be the 
necessary and sufficient condition for (iii), as is the case for 
first order equality cancellation. We found that while regularity 
is indeed sufficient, it is ``very far" from being necessary, where 
``very far" is meant in terms of the ratio of regularity 
$r(\eta_a)=\frac{\eta_{a^2}}{\eta_a} $ of the sequence $\eta_a$.

Indeed, as is easy to show,  $1\le r(\eta_a)_n \le \log n$ for all 
$n >1$ and all $\eta$ (Lemma \ref{L: H_n-a^2/a uparrow}). The two 
``extremal cases" for $r(\eta_a)$  are thus when $r(\eta_a)$ is 
bounded, which is precisely the case when $\eta$ is regular (see 
Corollary \ref {C: g regular iff a(g)} and preceding discussion) and 
when $r(\eta_a)\asymp \log$. The latter condition is equivalent to 
what we call the exponential $\Delta_2$-condition 
$\underset{m}{\sup}\,\frac{m^2(\eta_a)_{m^2}}{m(\eta_a)_m} < \infty$
   (Proposition \ref{P: exp Delta2 equiv log asymp a2/a}).
Surprisingly (for us) it turned out that either of these two extremal 
cases,  $\eta$ regular or $r(\eta_a)\asymp \log$,  is sufficient for 
$I$ to have the second order equality cancellation property (iii) 
(Theorem \ref{T: equality cancel}(i)).

A further investigation of the exponential $\Delta_2$-condition 
shows that if $\eta$ satisfies this condition, then so does 
$\eta_{a^p}$ for all $p\in \mathbb N$ (Corollary \ref {C:higher 
order}) and hence higher order cancellations also hold (Theorem \ref 
{T: higher order cancellation}).

While we do not know if these two conditions,  $\eta$  regular or 
$r(\eta_a)\asymp \log$, are necessary for the equality cancellation 
in (iii) to hold, we know that they are too strong for the inclusion 
cancellation in (i). Indeed a weaker  sufficient condition for (i) is 
that $r(\eta_a)$ is equivalent to a monotone sequence (Theorem \ref{T: monotone}(i)).

On the other hand, the two conditions, $\eta$  regular or 
$r(\eta_a)\asymp \log$, are too weak for the  inclusion cancellation 
in (ii). Indeed Example \ref {e: omega1/2} shows that the principal 
ideal generated by the regular sequence $\omega^{1/2}$ (where 
$\omega$ is the harmonic sequence $<\frac{1}{n}>$) does not satisfy 
the inclusion cancellation in (ii).

A sufficient condition for the inclusion cancellation in (ii) is 
that either $\eta = \widehat \eta$ or $r(\eta_a)\asymp \log$ (Theorem 
\ref{T: hat or log}(i), and for the definition of $\widehat \eta$ see the end of Section \ref{S: 2}).

Sufficient  conditions for each of the two cancellation properties 
(i) and (ii) in the case of  a general ideal $I$ are that every 
sequence in the characteristic set $\Sigma(I)$ of $I$ is pointwise majorized by 
a sequence satisfying the corresponding condition for the principal ideal case (Theorems \ref{T: monotone}(ii) and \ref{T: hat or log}(ii)). 
For the  equality cancellation property (iii) for general ideals however, 
we have to ask a stricter sufficient condition than for the principal 
ideal case, as the proof for the ``weaker" condition fails (Theorem \ref{T: equality cancel}(ii)). 
This corresponds roughly to the fact that  first order equality 
cancellation (iii$'$) can fail for arithmetically mean stable ideals 
even for a countably generated ideal (\cite[Example 5.5]{vKgW04-Density}).

There is a dual theory to arithmetic mean ideals in the trace class 
implicit in \cite{DFWW} and developed explicitly in 
\cite{vKgW02}-\cite{vKgW04-Density}, called arithmetic mean at 
infinity ideals and we found strong parallels throughout their 
development. It seems likely that this parallel would continue with 
the results of this paper although we have not pursued this line of 
investigation.

\section{\leftline{\bf Preliminaries and notation}} \label{S: 2}

Calkin \cite{jC41} established a correspondence between the two-sided 
ideals of $B(H)$ for a complex-valued infinite-dimensional separable 
Hilbert space $H$ and the \textit{characteristic sets}. 
These are the positive cones of $\text{c}_{\text{o}}^*$
(the collection of sequences decreasing to $0$) that are hereditary 
(i.e., solid) and invariant under \textit{ampliations}
\[
\text{c}_{\text{o}}^* \owns \xi \rightarrow 
D_m\xi:=~<\xi_1,\dots,\xi_1,\xi_2,\dots,\xi_2,\xi_3,\dots,\xi_3,\dots>
\]
where each entry $\xi_i$ of $\xi$ is repeated $m$-times.
The order-preserving lattice isomorphism
$I \rightarrow \Sigma(I)$ maps each ideal to its characteristic set 
$\Sigma(I) := \{s(X) \mid X \in I\}$ where $s(X)$ denotes the 
sequence of
$s$-numbers of $X$, i.e., all the eigenvalues of $|X| = (X^*X)^{1/2}$ 
repeated according to multiplicity, arranged in decreasing order, and 
completed by adding infinitely many zeroes if $X$ has finite rank.
Moreover, for every characteristic set $\Sigma \subset 
\text{c}_{\text{o}}^*$,
if $I$ is the ideal generated by $\{diag~\xi \mid \xi \in \Sigma\}$
where $diag~\xi$ is the diagonal matrix with entries 
$\xi_1,\xi_2,\dots$, then we have $\Sigma = \Sigma(I)$.

If $\xi \in \co* $, denote by $\xi_a$ the arithmetic mean sequence of 
$\xi$, namely
\[
(\xi_a)_n = \frac{1}{n}\sum_1^n \xi_j.
\]

If $I$ is an ideal, then the arithmetic mean ideals $_aI$ and $I_a$, 
called the \textit{pre-arithmetic mean} and \textit{arithmetic mean} 
of $I$, are the ideals with characteristic sets
\[
\Sigma(_aI) := \{\xi \in \text{c}_{\text{o}}^* \mid \xi_a \in \Sigma(I)\},
\]
\[
\Sigma(I_a) := \{\xi \in \text{c}_{\text{o}}^* \mid \xi = 
O(\eta_a)~\text{for some}~ \eta \in \Sigma(I)\}.
\]
The ideals $I^o:= (_aI)_a$ and $I^-:=\,_a(I_a)$ are called the 
\textit{am-interior} and \textit{am-closure} of $I$ and the following 
5-chain of inclusions  holds (all of which can be simultaneously proper):
\[
_aI \subset I^o \subset I \subset I^- \subset I_a.
\]
Simple consequences of the 5-chain are the identities 
$I_a=(_a(I_a))_a$ and $_aI=\,_a((_aI)_a)$
and the consequent idempotence of the maps $I \rightarrow I^-$ and $I \rightarrow I^o$.
And either derived from these or proven directly  are the higher order 5-chains of inclusions:
\[_{a^n}I \subset (_{a^n}I)_{a^n} \subset I \subset \,_{a^n}(I_{a^n}) 
\subset I_{a^n},\]
the higher order identities
$I_{a^n}=(_{a^n}(I_{a^n}))_{a^n}$ and $_{a^n}I=\,_{a^n}((_{a^n}I)_{a^n})$,
  and the idempotence of the maps
$I \rightarrow \,_{a^n}(I_{a^n})$ and $I \rightarrow (_{a^n}I)_{a^n}$.
So $_{a^n}(I_{a^n})$ are called higher order am-closures and form an 
increasing nest.
(Similarly, $(_{a^n}I)_{a^n}$ are called higher order am-interiors 
and form a decreasing nest.)

Principal ideals are those ideals generated by a single operator $X$ 
and we denote them by $(X) = (s_n(X))$, i.e., for every $\xi \in 
\co*$, the principal ideal $(diag\,\xi)$ is also denoted by $(\xi)$.  Then 
$\Sigma((\xi)) = \{\eta\in \co* \mid \eta = O(D_m\xi) \text{ for some 
} m\in \mathbb N\}$ and $(\xi)_a = (\xi_a)$.
Since  arithmetic mean sequences satisfy the $\Delta_{1/2}$ 
condition, i.e.,  $\xi_a \asymp D_2(\xi_a )$ and hence $\xi_a \asymp 
D_m(\xi_a )$ for all $m\in \mathbb N$, one has $\Sigma((\xi)_a) = \{\eta\in 
\co* \mid \eta = O(\xi_a) \}$. 
As a consequence,  $(\eta)_a \subset (\xi)_a$ 
(resp., $(\eta)_a = (\xi)_a$) if and only if $\eta_a = O(\xi_a)$ (resp., $\eta_a \asymp O(\xi_a)$).

We denote by $\omega$ the harmonic sequence $<\frac{1}{n}>$ and by 
$H$ the sequence $<\sum_1^n\frac{1}{j}>$ and we often use the inequalities
\be{e: Hn}
  \frac{1}{n} + \log n < H_n < 1 +  \log n \quad \text{for $n > 1$}
\ee
and
\be{e: Hn-Hm}
\log\,\frac{n}{m} - \frac{1}{m+1}  <  \log\,\frac{n+1}{m+1}  <  H_n - H_m  <  \log\,\frac{n}{m}  \quad \text{for $n > m$}.
\ee

Given an ideal $I$ and a sequence $\xi \in \co*$, the ideal $\widehat 
I$ and the sequence $\widehat \xi$ mentioned in the introduction are 
defined in \cite [Definitions 6.10, 6.11] {vKgW04-Density} as 
$\widehat I:= \cap\{J\mid J_a \supset I_a\}$ and $ (\widehat 
\xi)_n:=(\xi_{a})_{\nu(\xi)_{n}} $
where $\nu(\xi)_{n} := \min \{k \in \mathbb N \mid 
\sum_{i=1}^{k}\xi_{i} \geq n\xi_1 \}$. Then $\widehat 
{(\xi)}=(\widehat \xi) $ for $\xi$ not summable. In particular, we 
proved there that $\widehat {\omega^p}=\omega^{p'}$ where 
$\frac{1}{p}-\frac{1}{p'}=1$  and that if $\xi \asymp \widehat \xi$ 
then $\xi$ is regular (see [ibid., Corollaries 6.15 and 6.16, and 
Proposition 6.17]).

\section{\leftline{\bf Ratio of regularity }} \label{S: 3}

Ideals that coincide with their commutator space (i.e., those which do not support any nonzero trace) were identified in 
\cite{DFWW} as the arithmetic mean stable ideals, i.e., the ideals $I$ that coincide with their arithmetic mean $I_a$. 
Nonzero principal am-stable ideals are precisely the  ideals $I=(\xi)$ with a \textit{regular} generator, 
i.e., $\xi_a \asymp \xi$ (see \cite[III.14.3]{GK69}).  
Since $\xi \le \xi_a$ holds always, a sequence $\xi$ is regular precisely when $\xi \notin \Sigma(F)$ 
(i.e., $\xi_n >0$ for all $n \in \mathbb N$) and $\frac{\xi_a}{\xi}$ is bounded.  
In this paper we will see how properties of the ratio of regularity sequence $\frac{\eta_{a^2}}{\eta_a}$ of $\eta_a$ 
relate to second order arithmetic mean cancellations for $I=(\eta)$.

\begin{definition}\label{D: r}
Let $\xi \in \text{c}_\text{o}^*\setminus \Sigma(F)$. 
The sequence $r(\xi) := \frac{\xi_a}{\xi}$ is called  the ratio of regularity of $\xi$.
\end{definition}

\noindent   Notice that  for all $\xi \in 
\text{c}_\text{o}^*\setminus \Sigma(F)$, $r(\xi)_1=1,~r(\xi)_n \geq 
1$ for all $n$ and if $m$ is the first index for which $\xi_m < 
\xi_1$, then  $r(\xi)_n > 1$ for all $n \geq m$.

Ratios of regularity sequences have appeared implicitly in the literature and are helpful in analyzing various sequence properties. 
For instance they are instrumental in deriving the Potter-type inequality characterizing the sequences satisfying the $\Delta_{1/2}$- condition and characterizing regularity, see \cite[Theorem 3.10]{DFWW}, \cite[Proposition 4.14 (proof)-Corollary 4.15]{vKgW04-Traces}.  
Their usefulness derives in part from the inversion formula (\ref{e: reconstruction}) in the next proposition.

\bP{P:admiss}
There is a one-to-one correspondence between the sequences $\xi \in\co*\setminus \Sigma(F)$ with $\xi_1=1$ and the sequences $<r_n>$ 
with $r_1=1$,  $(n+1)r_{n+1} \geq  nr_n+1 $ for all $n$ and  $\log r_n +\sum_2^{n}\frac{1}{j}(1-\frac{1}{r_j})\rightarrow \infty$. 
This correspondence is given by the map $~\xi \to r(\xi):=\frac{\xi_a}{\xi}$ with inverse map $~r \to \xi~$ given by
\begin{equation}\label{e: reconstruction}
\xi_n=
\begin{cases}
1 &\text{for $n=1$}\\
\frac{1}{nr_n} \prod_{2}^{n}(1+\frac{1}{jr_j-1})= \frac{1}{nr_n \prod_{2}^{n}(1-\frac{1}{jr_j})} &\text{for $n>1$}.
\end{cases}
\end{equation}
\eP

\bp
The monotonicity of $\xi$ is equivalent to the inequality for $r_n$.
Indeed, first assume that $\xi$ is a monotone nonincreasing sequence with $\xi_1=1$  and $\xi_n >0$ for all $n$. 
Let $r:= r(\xi) = \frac{\xi_a}{\xi}$. 
Then  $r_1= \frac{(\xi_a)_1}{\xi_1} = 1$ and
\[
(n+1)r_{n+1}\xi_{n+1}  = \sum_{1}^{n+1} \xi_i = \sum_{1}^{n} \xi_i + \xi_{n+1} = nr_n\xi_n + \xi_{n+1},
\]
for all $n$, i.e., the sequences $r$ and $\xi$ satisfy the recurrence relation
\be {e:recurr}
((n+1) r_{n+1}-1)\xi_{n+1} = nr_n\xi_n, \quad \text{with  $~\xi_1=1$ 
and $r_1=1.$}
\ee
By the monotonicity of $\xi$,
\[
((n+1) r_{n+1}-1)\xi_{n+1} \ge nr_n\xi_{n+1}
\]
  and hence   $(n+1)r_{n+1} \geq  nr_n+1 $, since $\xi_{n+1} >0$ for all $n$.

Conversely, assume that $r=\,<r_n>$ is a sequence with $r_1 = 1$ and $(n+1)r_{n+1} \geq  nr_n+1 $ for all $n$ and $\xi$ is the sequence 
given by (\ref{e: reconstruction}). 
Then
\[
\frac{\xi_{n+1}}{\xi_n} = \frac{nr_n}{(n+1) r_{n+1}}\big(1+\frac{1}{(n+1) r_{n+1}-1} \big) = \frac{nr_n}{(n+1) r_{n+1}-1} \le 1,
\]
i.e.,  $\xi$ is monotone nonincreasing. 
Moreover, it is easy to verify that $\xi$ and $r$ also satisfy the recurrence relation (\ref{e:recurr}) and hence that $r(\xi)=r$.

It remains to prove that the limit condition on $r$ is necessary and sufficient for $\xi_n\to 0$. 
Clearly, $\xi_n\to 0$ if and only if
\be{e: conv0}
\log nr_n-\sum_{2}^{n}\log (1+\frac{1}{jr_j-1})   \rightarrow \infty.
\ee
  
\noindent Equivalently,
\[
\log r_n + \sum_{2}^{n}\big(\frac{1}{j}  - \log 
(1+\frac{1}{jr_j-1})\big)   \rightarrow \infty.
\]
Since $\frac{1}{x} < \log \big( 1 + \frac{1}{x-1}\big)  < 
\frac{1}{x-1}$ for $ x > 1$ and hence
\[
\sum_{2}^{n}\Big( \frac{1}{j}-\frac{1}{jr_j} \Big) - \sum_2^n \frac{1}{j(j-1)} 
\le \sum_{2}^{n}\Big( \frac{1}{j}-\frac{1}{jr_j-1} \Big)
< \sum_{2}^{n}\Big( \frac{1}{j}-\log(1+\frac{1}{jr_j-1} \Big)
< \sum_{2}^{n}\Big( \frac{1}{j}-\frac{1}{jr_j} \Big),
\]
it follows that (\ref{e: conv0})  is equivalent to
\be{e:admiss}
\log r_n + \sum_{2}^{n}\frac{1}{j}(1-\frac{1}{r_j}) \rightarrow \infty.
\ee
\ep

\bR{R:admiss}  Since the sequence in (\ref {e: conv0}) is monotone 
nondecreasing, sufficient conditions for  (\ref{e:admiss}) are $\sup r= \infty$ or $\sum_2^{\infty}\frac{1}{jr_j}<\infty$. 
A necessary and sufficient condition for (\ref {e:admiss}) is that $\sup r= \infty$ 
or  $ \sum_{2}^{\infty} \frac{1}{j}(1-\frac{1}{r_j}) = \infty$. (These conditions are not mutually exclusive.)
\eR
  \noindent Lower bound estimates for the rate of decrease of 
$\frac{\xi_n}{\xi_m}$ for $n \ge m$ are an important tool in the subject. 
For instance,  the Potter-type characterization of a regular 
sequence states that $\xi$ is regular if and only if $\xi_n \ge 
\big(\frac{m}{n}\big)^p\xi_m$ for some $0 < p < 1$ and for all $n\ge m$ (see for instance \cite[Proposition 4.14]{DFWW}). 
Since  $n(\eta_a)_n$ is monotone increasing for every $\eta \in\co*$ 
(strictly increasing if and only if $\eta_n > 0$ for all $n$, i.e., if $\eta \notin \Sigma(F)$), 
it follows that  $(\eta_a)_n \ge \big(\frac{m}{n}\big)(\eta_a)_m$, for all $n\ge m$. 
The next lemma provides both an identity and an upper bound estimate for the rate of decrease of 
$\frac{(\eta_a)_n}{(\eta_a)_m}$ for $n \ge m$, both of which are needed here.

\bL{L:ratio} Let $\xi \in\co*\setminus \Sigma(F)$ and  let  $r= r(\xi)$.
\item[(i)]  For all $n > m$,
\[
(\xi_a)_n = \frac{m}{n} 
\prod_{j=m+1}^{n}(1+\frac{1}{jr_j-1})(\xi_a)_m = \frac{ 
\frac{m}{n}}{\prod_{j=m+1}^{n}\big(1-\frac{1}{jr_j}\big)}(\xi_a)_m.
\]
\item[(ii)] If $\phi$ is a nondecreasing strictly positive sequence with $\phi\le r$, then
\[
(\xi_a)_n \le \big(\frac{m}{n}\big)^{1- \frac{2}{\phi_m}}(\xi_a)_m
\]
for every $m\in \mathbb N$ sufficiently large so that  $m \phi_m > 2$ 
and for every $n \ge m$.
\eL

\bp
\item[(i)] By applying (\ref{e: reconstruction}) to $\frac{1}{\xi_1}\xi$ we have
\[
\xi_n = \frac{1}{nr_n} \prod_{j=2}^{n}(1+\frac{1}{jr_j-1})\xi_1=
\frac{mr_m}{nr_n} \prod_{j=m+1}^{n}(1+\frac{1}{jr_j-1})\xi_m
=\frac{m}{n}\frac{\xi_n}{(\xi_a)_n}\prod_{j=m+1}^{n}(1+\frac{1}{jr_j-1})(\xi_a)_m.
\]
\item[(ii)]
Equality holds trivially for $n=m$, and for  $n > m$ one has
\begin{align*}
(\xi_a)_n
&= \frac 
{\frac{m}{n}}{\prod_{j=m+1}^{n}\big(1-\frac{1}{jr_j}\big)}(\xi_a)_m&\text{by 
(i)}\\
&\le\frac{ \frac{m}{n}}{\prod_{j=m+1}^{n}\big(1-\frac{1}{ 
j\phi_j}\big)} (\xi_a)_m&\text{since $\phi \le r$}\\
&\le \frac 
{\frac{m}{n}}{\prod_{j=m+1}^{n}\big(1-\frac{1}{j\phi_m}\big)} 
(\xi_a)_m&\text{because $\phi$ is nondecreasing}\\
&=  \frac{m}{n}e^{-\sum_{j=m+1}^{n}\log \big(1-\frac{1}{ 
j\phi_m}\big)}(\xi_a)_m\\
&\le  \frac{m}{n}e^{\sum_{j=m+1}^{n} \frac{2}{ j\phi_m}} (\xi_a)_m& 
\text{by the inequality $log\,(1-x) \ge -2x$ for $0 \le x < 
\frac{1}{2}$,} \\
&= \frac{m}{n}e^{\frac{2}{\phi_m}(H_n-H_m)}(\xi_a)_m\\
&\le \frac{m}{n}e^{\frac{2}{ \phi_m}\log \frac{n}{m} 
}(\xi_a)_m&\text{by (\ref{e: Hn-Hm}) }\\
&= \big(\frac{m}{n}\big)^{1- \frac{2}{\phi_m}}(\xi_a)_m.
\end{align*}
\ep

Notice that in general, while the ratio of regularity $\frac{\xi_a}{\xi}$ has downward variations bounded by the 
inequality $r_{n+1} \geq  \frac{n}{n+1}r_n+ \frac{1}{n+1} $, it can vary abruptly upwards since
\[
r(\xi)_{n+1} = \frac{(\xi_a)_{n+1}}{\xi_{n+1}}
= \frac{(n+1)(\xi_a)_n}{(n+1)\xi_{n+1}}
\ge  \frac{n(\xi_a)_n}{(n+1)\xi_{n+1}}
= \frac{n}{n+1}\frac{\xi_n}{\xi_{n+1}}r(\xi)_n
\]
and $\frac{\xi_n}{\xi_{n+1}}$ can be arbitrarily large.

Sequences in $\co*$ that are an arithmetic mean of another sequence 
in $\co*$ are however smoother and their ratios of regularity are 
subject to ``slower" upward variations. Indeed, if  $\xi = \eta_a$ for 
some $\eta \in \co*$,  the upward variation of $\frac{\xi_a}{\xi} = \frac{\eta_{a^2}}{\eta_a}$ is limited by the inequality
\be{e:ineq rn}
r(\eta_a)_{n+1}  = \frac{(n+1)(\eta_{a^2})_{n+1}}{(n+1)(\eta_a)_{n+1}}
\le \frac{(n+1)(\eta_{a^2})_n}{n(\eta_a)_n}
=(1+\frac{1}{n})r(\eta_a)_n.
\ee

As is easy to verify (cfr. \cite[Lemma 2.12]{vKgW04-Soft}), a 
sequence $\xi \in \co*$ is the arithmetic mean of another sequence in 
$\co*$ if and only if $\frac{\xi}{\omega}$ is nondecreasing and 
concave, i.e.,
\[2n\xi_n \ge (n+1)\xi_{n+1} +  (n-1)\xi_{n-1} ~\text{\quad for all}~ n>1.\] 
Define 
the \textit{concavity ratio} $c(\xi)$ of a sequence 
$\xi\in\co*\setminus\Sigma(F)$ to be
\be{e:concav}
c(\xi)_n:=\frac{n\xi_n}{(n+1)\xi_{n+1}}.
\ee
Concavity of $\frac{\xi}{\omega}$ is equivalent to the condition: $c(\xi)_n + \frac{1}{c(\xi)_{n+1}} \le 2$.
Since we are not going to make use of the concavity ratio beyond a slight improvement in inequality (\ref {e:ineq rn}) 
for Corollary \ref{C: concav2}(ii), we will only sketch briefly the proofs of the next proposition 
(which is an analog of Proposition \ref{P:admiss}) and its Corollaries \ref{C: concav1}, \ref{C: concav2}.

\bP{P:concav}
There is a one-to-one correspondence between the sequences $\xi 
\in\co*\setminus \Sigma(F)$ with $\xi_1=1$ and the sequences $<c_n>$ 
with  $c_n \ge 1 - \frac{1}{n+1}$ and $\sum_1^{\infty}\big(\frac{1}{j} + \log{c_j} \big) = \infty$.
  This correspondence is established by the map $\xi \to 
c(\xi):=~<\frac{n\xi_n}{(n+1)\xi_{n+1}}>$ with inverse map 
  \begin{equation}\label{e:reconstr c}
\xi_n=
\begin{cases}
1 &\text{for $n=1$}\\
\frac{1}{ n\prod_{1}^{n-1}c_j} &\text{for $n>1$.}
\end{cases}
\end{equation}
Moreover,
\[
c(\xi)_n=\frac{r(\xi)_{n+1}-\frac{1}{n+1}}{r(\xi)_n}\quad \text{and}\quad
r(\xi)_n=
\begin{cases}
1 &\text{ for $n=1$}\\
\frac{1}{n} + \sum_{k=1}^{n-1}\frac{1}{k}\prod_{j=k}^{n-1} c(\xi)_j 
&\text{ for $n>1.$}
\end{cases}
\]
\eP

\bp
It is straightforward to verify that (\ref{e:reconstr c}) provides the inverse of (\ref{e:concav}) and that the monotonicity of $\xi$ is 
equivalent to the condition  $c_n \ge 1 - \frac{1}{n+1}$. 
Also, $\xi_n \to 0$ if and only if 
$$\log{\frac{1}{\xi} }= \log n + \sum_1^{n-1}\log c_j \to \infty$$ 
which by (\ref {e: Hn}) is 
equivalent to $\sum_1^{\infty}\big(\frac{1}{j} + \log{c_j} \big) = \infty$. 
The remaining equations are simple computations.
\ep

\bC{C: concav1}
There is a one-to-one correspondence between the sequences $\xi = \eta_a$ with $\eta \in\co*$, $\eta_1=1$ and the sequences $<c_n>$ 
with $c_1 \ge \frac{1}{2}$, $c_n > 0$, $c_n+\frac{1}{c_{n+1}} \le 2$ 
for all $n\in \mathbb N$ and $\sum_1^{\infty}\big(\frac{1}{j} + \log c_j \big) = \infty$.
This correspondence is established by the map $\xi \to c(\xi)$ (Equation (\ref{e:concav})).
Moreover, $c(\xi)_n \uparrow 1$ and
  \begin{equation*}
\eta_n=
\begin{cases}
1 &\text{for $n=1$}\\
\frac{1-c(\xi)_{n-1}}{ \prod_{1}^{n-1}c(\xi)_j} &\text{for $n>1$.}
\end{cases}
\quad \text {and} \quad r(\eta)_n =
\begin{cases}
1 &\text{for $n=1$}\\
\frac{1}{n(1-c(\xi)_{n-1})} &\text{for $n>1.$}
\end{cases}
\end{equation*}
\eC

\bp
It is an immediate consequence of the above mentioned concavity of $\frac{\xi}{\omega}$ and of Proposition \ref{P:concav} 
that the sequence $c(\xi)$ satisfies the stated inequalities and the series condition when $\xi = \eta_a$ and $\eta \in \co*$.
Conversely, it is straightforward to show that if $<c_n>$ satisfies these conditions then it is nondecreasing, 
its limit is $1$ and an easy induction shows that $c_n \ge 1 - \frac{1}{n+1}$. 
Thus again by [ibid.], $c= c(\xi)$ for the sequence $\xi$ given by (\ref {e:reconstr c}), which then implies that $\frac{\xi}{\omega}$ is nondecreasing. 
And by the comment after Equation (\ref{e:concav}), it is also concave, hence $\xi =\eta_a$ for some $\eta\in\co*$. 
The remaining claims are also easy to verify directly.
\ep

Combining Proposition \ref {P:concav} and Corollary \ref {C: concav1} (wherein $c(\xi)_n \le 1$) we obtain

\bC{C: concav2} For $\xi\in\co*\setminus \Sigma(F)$ and $\xi_1=1$:
\item[(i)] $\xi =\eta_a$ for some $\eta\in\co*$ if and only if
\[
\frac {r(\xi)_n-\frac{1}{n}}{r(\xi)_{n-1}} + \frac 
{r(\xi)_n}{r(\xi)_{n+1}-\frac{1}{n+1}} \le 2 \quad (n > 1).
\]
\item[(ii)] $r(\eta_a)_{n+1} \le r(\eta_a)_n+ \frac{1}{n+1} < \big(1+ \frac{1}{n}\big)r(\eta_a)_n \quad (n \ge 1)$

\eC

>From Corollary \ref {C: concav2}(ii) one sees immediately that $r(\eta_a)_n \le H_n$ for all $n$. 
Of interest is a direct proof of this fact that avoids ratios of concavity considerations.
\begin{lemma}\label{L: H_n-a^2/a uparrow} Let  $0 \ne \eta \in \co*$.
\item[(i)]  $ r(\eta_a)_1 = H_1=1$ and if $\eta_2 > 0$, then $ r(\eta_a)_n < H_n$ for all $n>1$.
\item[(ii)] If $\eta_n > 0$ for all $n$, then $H_n - r(\eta_a)_n  $ 
is strictly increasing.
\item[(iii)] If $\eta$ is not summable, then $\lim (H_n - r(\eta_a)_n)= \infty$.
\end{lemma}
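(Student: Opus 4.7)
The plan is to extract, by reversing the order of summation in the double sum defining $\eta_{a^2}$, the clean identity
\[
H_n-r(\eta_a)_n \;=\; \frac{\sum_{j=1}^n \eta_j\,H_{j-1}}{\sum_{j=1}^n \eta_j}
\qquad (H_0:=0),
\]
and then read off (i), (ii), (iii) from it. Indeed, $n(\eta_{a^2})_n = \sum_{k=1}^n (\eta_a)_k = \sum_{k=1}^n \frac{1}{k}\sum_{j=1}^k \eta_j = \sum_{j=1}^n \eta_j(H_n - H_{j-1})$, so dividing by $\sum_{j=1}^n \eta_j = n(\eta_a)_n$ gives $r(\eta_a)_n = H_n - \frac{\sum_{j=1}^n \eta_j H_{j-1}}{\sum_{j=1}^n \eta_j}$, as claimed.

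For part (i), at $n=1$ the numerator is $\eta_1 H_0 = 0$, hence $r(\eta_a)_1 = H_1 = 1$. For $n>1$ with $\eta_2>0$, the numerator contains the strictly positive term $\eta_2 H_1 = \eta_2$, so $H_n - r(\eta_a)_n > 0$. For part (ii), write $A_n:=\sum_{j=1}^n \eta_j H_{j-1}$ and $B_n:=\sum_{j=1}^n \eta_j = n(\eta_a)_n$, so that
\[
(H_{n+1}-r(\eta_a)_{n+1})-(H_n-r(\eta_a)_n) \;=\; \frac{A_{n+1}B_n - A_n B_{n+1}}{B_nB_{n+1}} \;=\; \frac{\eta_{n+1}(H_n B_n - A_n)}{B_nB_{n+1}}.
\]
By the same switch of summations used above, $H_nB_n - A_n = \sum_{j=1}^n\eta_j(H_n-H_{j-1}) = n(\eta_{a^2})_n$, which is strictly positive when every $\eta_n>0$; hence $H_n - r(\eta_a)_n$ is strictly increasing.

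For (iii), I will use a standard weighted-average argument. Fix any $N\in\mathbb N$. Since $\eta$ is not summable, $B_n\to\infty$, so $\frac{B_n-B_{N-1}}{B_n}\to 1$ as $n\to\infty$. Using $H_{j-1}\ge H_{N-1}$ for $j\ge N$ and discarding the first $N-1$ nonnegative terms gives
\[
H_n - r(\eta_a)_n \;=\; \frac{A_n}{B_n} \;\ge\; H_{N-1}\cdot\frac{B_n-B_{N-1}}{B_n},
\]
so $\liminf_n(H_n-r(\eta_a)_n)\ge H_{N-1}$; letting $N\to\infty$ finishes the proof.

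No step is really an obstacle; the only subtlety is to resist computing $(\eta_{a^2})_n$ directly and instead first secure the summation-switching identity, from which all three assertions are immediate.
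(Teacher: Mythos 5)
Your proof is correct and follows essentially the same route as the paper: the same switch of summation order yields the identity $H_n-r(\eta_a)_n=\frac{\sum_{j}\eta_jH_{j-1}}{\sum_j\eta_j}$, part (ii) is the same difference computation whose numerator is recognized as $\eta_{n+1}\,n(\eta_{a^2})_n$, and your weighted-average argument for (iii) just spells out what the paper dismisses as elementary.
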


\begin{proof}
\item[(i)] $H_1=(\frac{\eta_{a^2}}{\eta_a})_1=1$.  For $n > 1$,
\begin{align*}
n(\eta_{a^2})_n &= \sum_{j=1}^{n} \frac{1}{j} \sum_{i=1}^{j} \eta_i
= \sum_{i=1}^{n} \sum_{j=i}^{n} \frac{\eta_i}{j}
= \eta_1 H_n + \sum_{i=2}^{n} \eta_i (H_n-H_{i-1}) \notag\\
&= H_n \sum_{i=1}^{n} \eta_i - \sum_{i=2}^{n} \eta_i H_{i-1} = H_n 
n(\eta_a)_n - \sum_{i=2}^{n} \eta_i H_{i-1}. \notag
\end{align*}

Thus
\be{e:H-ratio}
H_n - r(\eta_a)_n  = \frac{\sum_{i=2}^{n} \eta_i 
H_{i-1}}{\sum_{i=1}^{n} \eta_i} > 0
\ee
by the assumption that $\eta_2 >0$.
\item[(ii)] Using (\ref{e:H-ratio}),
\begin{align*} 
H_{n+1} &- (\frac{\eta_{a^2}}{\eta_a})_{n+1} -  \big(H_n - (\frac{\eta_{a^2}}{\eta_a})_n\big) = \frac{\sum_{i=2}^{n+1} \eta_i 
H_{i-1}}{\sum_{i=1}^{n+1} \eta_i} - \frac{\sum_{i=2}^{n} \eta_i H_{i-1}}{\sum_{i=1}^{n} \eta_i}\\
&= \eta_{n+1}\frac{H_n(\sum_{i=1}^{n} \eta_i) - \sum_{i=2}^{n} \eta_i H_{i-1}}{(\sum_{i=1}^{n+1} \eta_i)(\sum_{i=1}^{n} \eta_i)}
= \eta_{n+1}\frac{n(\eta_{a^2})_n }{(\sum_{i=1}^{n+1} \eta_i)(\sum_{i=1}^{n} \eta_i)} \\
&= \frac{(\frac{\eta_{a^2}}{\eta_a})_n}{(n+1)(\frac{\eta_{a}}{\eta})_{n+1}}>0,
\end{align*}
where the third equality was obtained in (i).
\item[(iii)]  Elementary from (\ref{e:H-ratio}) since $H_n \to \infty$.
\end{proof}

This lemma tells us that there are two extreme cases for the ratio of regularity for $\eta_a$:
when $r(\eta_a) \asymp 1$ (i.e., $\eta_a$ is regular) and when $r(\eta_a) \asymp \log$ (by which we mean more precisely: \linebreak
$\alpha \log n \le r(\eta_a)_n  \le \beta \log n$ for some $\alpha,~\beta > 0$ and all $n\ge 2$). 
As we will see in the next section, both cases play a special role for second order arithmetic mean cancellation. 

First we obtain some elementary comparisons between the ratios of 
regularity for $\eta$ and for $\eta_a$ evaluated at pairs of indices.

  \bL{L:ineq}
Let $\eta\in \co* \setminus \Sigma(F)$.
\item[(i)] $\frac{m}{n}  \left( (\eta_a)_m -\eta_n \right) + \eta_n \le (\eta_a)_n \le \frac{m}{n}\big((\eta_a)_m -\eta_m \big) + \eta_m$ \quad for all $n \ge m$.
\item[(ii)] $ \frac{m}{n}\big((\eta_{a^2})_m +(H_n-H_m) (\eta_a)_m\big) \le (\eta_{a^2})_n
\le \frac{m}{n}(\eta_{a^2})_m +(H_n-H_m) (\eta_a)_n$ \quad for all   $n \ge m$.
\item[(iii)]  Let  $n = [m(\frac{\eta_a}{\eta})_m]$ (integer part). 
Then $r(\eta_a)_n  >  \frac{1}{2}\log\,r(\eta)_m$.
  \eL
  \bp
\item[(i)] The inequalities  
are identities for $n=m$. 
For $n > m$,
\[
n (\eta_a)_n  =  \sum_{1}^{n} \eta_j = \sum_{1}^{m} \eta_j  + 
\sum_{m+1}^{n} \eta_j  \leq  m (\eta_a)_m  +  (n-m)\eta_m,
\]
and also
\[
n (\eta_a)_n  \ge m (\eta_a)_m  +  (n-m)\eta_n.
\]
\item[(ii)] The inequalities
are identities for $n=m$. For $n > m$,
\begin{align*}
n (\eta_{a^2})_n   &=   m (\eta_{a^2})_m  + \sum_{m+1}^{n} 
\frac{1}{j} ~j (\eta_a)_j   \\
&\leq   m(\eta_{a^2})_m  + \sum_{m+1}^{n}\frac{1}{j}~ n (\eta_a)_n   \\
&= m(\eta_{a^2})_m  +  (H_n - H_m)n (\eta_a)_n,
\end{align*}
and also
\begin{align*}
n (\eta_{a^2})_n   &\geq   m(\eta_{a^2})_m  + \sum_{m+1}^{n}\frac{1}{j}~ 
m (\eta_a)_m   \\
&= m(\eta_{a^2})_m  + (H_n - H_m) m (\eta_a)_m.
\end{align*}
\item[(iii)] 
By the definition of $n$, 
$\frac{n+1}{m} > (\frac{\eta_a}{\eta})_m\ge \frac{n}{m}$. 
Hence by (i), (ii), (\ref{e: Hn-Hm}), and the trivial inequality $(\eta_{a^2})_m \ge (\eta_{a})_m$,
\begin{align*}
(\frac{\eta_{a^2}}{\eta_a})_n &> \frac{m \left((\eta_{a^2})_m + (\eta_a)_m \log \frac{n+1}{m+1}\right)}{m (\eta_a)_m + (n-m)\eta_m} \\
&> \frac{ (\eta_a)_m\left( 1+  \log \frac{n+1}{m+1}\right)}{(\eta_a)_m + \frac{n}{m}\eta_m}\\
&> 		\frac {\log \frac{n+1}{m}}{2}\\
&>  \frac{1}{2} \log (\frac{\eta_a}{\eta})_m.
\end{align*}

  \ep

\noindent This proof evolved from work of K. Davidson with the second named author.
The original proof was obtained by the authors using ratios of 
concavity.\medskip

Lemma \ref{L:ineq} provides a direct and quantitative proof for the following result (proven implicitly in \cite[Theorem IRR]{jV89} and explicitly in 
\cite[Theorem 3.10]{DFWW}; see also [ibid., Remark 3.11]).

\begin{corollary}\label{C: g regular iff a(g)}
For $\eta \in \text{c}_{\text{o}}^*$, $\eta$ is regular if and only if $\eta_a$ is regular.
\end{corollary}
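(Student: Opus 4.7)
The plan is to prove the two implications using the quantitative content of Lemma \ref{L:ineq}. Recall that a sequence $\xi$ is regular iff $\xi \notin \Sigma(F)$ and $r(\xi) = \xi_a/\xi$ is bounded.

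For the forward direction, I would proceed by a direct monotonicity argument. Assume $\eta$ is regular, so $\eta \notin \Sigma(F)$ and there exists $C$ with $\eta_a \le C\eta$. Applying the (monotone) arithmetic mean operation to both sides yields $\eta_{a^2} \le C\eta_a$. Since $\eta_1 > 0$ forces $(\eta_a)_n > 0$ for all $n$, we have $\eta_a \notin \Sigma(F)$, and the bound $r(\eta_a) \le C$ shows $\eta_a$ is regular. This direction is essentially immediate and carries no real difficulty.

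For the reverse direction, I would argue by contrapositive, splitting into two subcases according to whether $\eta \in \Sigma(F)$. If $\eta \in \Sigma(F) \setminus \{0\}$, then $\eta_a$ is eventually equal to $\|\eta\|_1/n$, hence $\eta_a \asymp \omega$, and so $r(\eta_a)_n \asymp H_n \to \infty$ by (\ref{e: Hn}); thus $\eta_a$ is not regular. Otherwise $\eta \in \co* \setminus \Sigma(F)$ is not regular, which means $r(\eta)$ is unbounded, so we can select indices $m_k$ with $r(\eta)_{m_k} \to \infty$. Setting $n_k := [m_k r(\eta)_{m_k}]$ and invoking Lemma \ref{L:ineq}(iii), we obtain
\[
r(\eta_a)_{n_k} > \tfrac{1}{2}\log r(\eta)_{m_k} \to \infty,
\]
so $r(\eta_a)$ is unbounded and $\eta_a$ is not regular.

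The substantive step is the appeal to Lemma \ref{L:ineq}(iii), which converts an unbounded ratio of regularity for $\eta$ at index $m$ into a logarithmically large ratio of regularity for $\eta_a$ at the shifted index $n = [m\, r(\eta)_m]$; once that estimate is in hand, the corollary follows by elementary manipulation. The only subtlety beyond that is the bookkeeping of the degenerate finite-support case, which is handled by the $\eta_a \asymp \omega$ observation above.
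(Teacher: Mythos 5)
Your proof is correct and follows essentially the route the paper intends: the paper derives this corollary directly from Lemma \ref{L:ineq}(iii), which is exactly your key step for the nontrivial direction (unbounded $r(\eta)$ at indices $m_k$ forces $r(\eta_a)_{[m_k r(\eta)_{m_k}]} > \tfrac12\log r(\eta)_{m_k} \to \infty$). Your easy direction via monotonicity of the arithmetic mean and your treatment of the finite-rank case via $\eta_a \asymp \omega$ are routine and fine.
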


Now we consider the second ``extreme" case, namely when $r(\eta_a) \asymp \log$, meaning in the sense that, except for $n=1$, 
$\alpha \log n \le r(\eta_a)_n \le \beta \log n$ for some $\alpha,~\beta > 0$.

\bP{P: exp Delta2 equiv log asymp a2/a} Let $0 \ne \eta \in \co*$. 
Then $r(\eta_a) \asymp \log$ if and only if $\underset{m}{\sup}\,\frac{m^2(\eta_a)_{m^2}}{m(\eta_a)_m} < \infty$.
\eP
\bp
Assume $\gamma := \underset{m} 
\sup\,\frac{m^2(\eta_a)_{m^2}}{m(\eta_a)_m}  <  \infty$.
For all $n \geq m$, it follows from Lemma \ref{L:ineq}(ii) that
\[
(\eta_{a^2})_n  >  \frac{m}{n}(H_n - H_m)(\eta_a)_m 
\]
and hence by (\ref{e: Hn-Hm})
\begin{equation*}
\big(\frac{\eta_{a^2}}{\eta_a}\big)_n  >  \frac{m}{n} \frac{(\eta_a)_m}{(\eta_a)_n}~(H_n - H_m)
>  \frac{m}{n} \frac{(\eta_a)_m}{(\eta_a)_n}\log\frac{n+1}{m+1}
\end{equation*}
Set $m = [\sqrt n]+1$.
Then
\begin{align*}
\left(\frac{\eta_{a^2}}{\eta_a}\right)_n  &> 
\frac{m(\eta_a)_m}{m^2(\eta_a)_{m^2}}\, 
\frac{m^2(\eta_a)_{m^2}}{n(\eta_a)_n}\log\frac{n+1}{\sqrt n+1} 
	\qquad &\\
&\ge 
\frac{1}{\gamma}\frac{m^2(\eta_a)_{m^2}}{n(\eta_a)_n}\log\frac{n+1}{\sqrt n+1}	\qquad &\text{by the definition of $\gamma$ }\\
&\ge 	\frac{1}{\gamma}\log\frac{n+1}{\sqrt n+1}	&\text{by the monotonicity of $k(\eta_a)_k$ since $m^2 > n$} \\
&\ge  \frac{1}{3\gamma}\log n &\text {for $n$ sufficiently large.}
\end{align*}
On the other hand, by Lemma \ref{L: H_n-a^2/a uparrow}(i) and (\ref{e: Hn-Hm}),
$r(\eta_a)_n < H_n < \log n +1$, so $r(\eta_a) \asymp \log$.

Conversely, assume $r(\eta_a) \asymp \log$, i.e.,  $\alpha \log n \le r(\eta_a)_n  \le \beta \log n$ for some $\alpha,~\beta > 0$ and all $n\ge 2$. 
Then
\[
\frac{m^2(\eta_a)_{m^2}}{m(\eta_a)_m} = 
\frac{\big(\frac{\eta_{a^2}}{\eta_a}\big)_m}{\big(\frac{\eta_{a^2}}{\eta_a}\big)_{m^2}}\frac{m^2(\eta_{a^2})_{m^2}}{m(\eta_{a^2})_m}
\le\frac{\beta}{\alpha}\frac{\log m}{\log m^2}\frac{m^2(\eta_{a^2})_{m^2}}{m(\eta_{a^2})_m}
=\frac{\beta}{2\alpha}\frac{m^2(\eta_{a^2})_{m^2}}{m(\eta_{a^2})_m}
\]
and similarly,
\[
\frac{m^2(\eta_a)_{m^2}}{m(\eta_a)_m}\ge 
\frac{\alpha}{2\beta}\frac{m^2(\eta_{a^2})_{m^2}}{m(\eta_{a^2})_m},
\]
i.e.,
\be{e:higher order}
\frac{m^2(\eta_a)_{m^2}}{m(\eta_a)_m} \asymp 
\frac{m^2(\eta_{a^2})_{m^2}}{m(\eta_{a^2})_m}.
\ee
Now, from Lemma \ref {L:ratio}(ii) applied to $\xi = \eta_a$, $\phi 
= \alpha \log$, and $n = m^2$, we have
\[
  \frac{m^2(\eta_{a^2})_{m^2}}{m(\eta_{a^2})_m}
\le m \big(\frac{m}{m^2}\big)^{1-\frac{2}{\alpha \log m}}= 
e^{\frac{2}{\alpha}}.
\]
\ep

\noindent By analogy to the $\Delta_2$-condition for monotone sequences ($\underset{m}\sup\frac{\phi(2m)}{\phi(m)}<\infty$), we say 
that $\eta$ satisfies the \textit{exponential $\Delta_2$-condition} if, $\underset{m}\sup \frac{m^2(\eta_a)_{m^2}}{m(\eta_a)_m} < \infty$. 
i.e., $\sum_{i=1}^{m^2} \eta_i \le c\sum_{i=1}^{m} \eta_i $ for some $c > 0$ and all $m$.  \\

An obvious consequence of (\ref{e:higher order}) is:

\bC{C:higher order}
Let $0 \ne \eta \in \co*$. If $\eta$ satisfies the exponential $\Delta_2$-condition then  $\eta_{a^p}$ satisfies the exponential 
$\Delta_2$-condition for every $p \in \mathbb N$.
\eC

\begin{example}\label{R: log^pn}
Among the  $\co*$-sequences that satisfy the exponential $\Delta_2$-condition are all  summable sequences, all sequences 
$<\frac{\log^pn}{n}>$,  $<\frac{\log^pn(\log\log n)^q}{n}>$, etc. (starting from wherever they become monotone decreasing).
Among sequences that do not satisfy the exponential $\Delta_2$-condition are all regular sequences, the sequences in 
Examples \ref{E: Example 6} and \ref{e: omega1/2}. Likewise for the sequence (starting from wherever it becomes monotone decreasing)
\[
\eta := \left< \frac{e^{\int_{e^e}^n \frac{1}{t\log\log\, t}}}{n\log^2\log\, n}\right> \quad \text{for which \quad $r(\eta_a)\asymp \log \log n$.}
\]
We skip the work to verify the stated properties for these sequences.

This last example shows that the condition that $r(\eta_a)$ be equivalent to a monotone sequence (see Theorem \ref{T: monotone}) is more general than that $r(\eta_a)$ be equivalent to $log$ or $1$ (see Theorems \ref{T: hat or log} and \ref{T: equality cancel}). 
  \end{example}

Both regular sequences and sequences that satisfy the exponential $\Delta_2$-condition are special cases of sequences $\eta$ for which 
$r(\eta_a)$ is equivalent to a monotone sequence. 
In that case, since $r(\eta_a) \ge 1$, $r(\eta_a)$ is either equivalent to a sequence increasing to infinity or to a constant sequence (when $\eta$ is regular); in either case, it is equivalent to a nondecreasing sequence so that Lemma \ref{L:ratio}(ii) applies. \\

\begin{lemma}\label{L: monotone}
If  $0 \ne \eta \in \text{c}_{\text{o}}^*$ and $r(\eta_a)$ is equivalent to a monotone sequence, 
then there are constants $K, M >0$  for which, if $m \ge M$, then $(\eta_{a^2})_n \le K\frac{m}{n}\log\frac{n}{m}\, (\eta_a)_m$ for some $n > m$.
\end{lemma}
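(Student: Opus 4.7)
\emph{Plan.} My plan is to combine Lemma \ref{L:ineq}(ii) (which splits $(\eta_{a^2})_n$ in terms of $(\eta_{a^2})_m$ and $(\eta_a)_n$) with Lemma \ref{L:ratio}(ii) (which controls the decay of $(\eta_a)$ using a nondecreasing minorant of $r(\eta_a)$), and then choose $n=n(m)$ so that the two resulting contributions balance at order $(m/n)\log(n/m)(\eta_a)_m$.

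First I would observe that, since $r(\eta_a)_n \ge 1$ for all $n$, any monotone equivalent of $r(\eta_a)$ may be taken nondecreasing: a nonincreasing equivalent is forced to be bounded below by a positive constant, hence bounded, which puts us in the regular case where we may just take $\phi$ to be a (nondecreasing) positive constant. So fix a nondecreasing strictly positive sequence $\phi$ with $\phi \le r(\eta_a) \le C\phi$ for some $C \ge 1$. Lemma \ref{L:ineq}(ii) gives
\[
(\eta_{a^2})_n \le \frac{m}{n}(\eta_{a^2})_m + (H_n - H_m)(\eta_a)_n.
\]
Substituting $(\eta_{a^2})_m = r(\eta_a)_m(\eta_a)_m \le C\phi_m(\eta_a)_m$, using $H_n - H_m < \log(n/m)$, and applying Lemma \ref{L:ratio}(ii) (valid once $m\phi_m > 2$) to bound $(\eta_a)_n \le (m/n)^{1-2/\phi_m}(\eta_a)_m$, I get
\[
(\eta_{a^2})_n \;\le\; \frac{m}{n}\,(\eta_a)_m\,\left[\,C\phi_m + \log(n/m)\,(n/m)^{2/\phi_m}\,\right].
\]

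I would then set $n = \lceil me^{\phi_m}\rceil$. For $m$ large this gives $\phi_m \le \log(n/m) \le \phi_m + \log 2$, so $(n/m)^{2/\phi_m} = e^{2\log(n/m)/\phi_m}$ is bounded by a constant (since $\phi_m$ is bounded below by $1/C > 0$). Hence the bracket is at most $K_0\phi_m \le K_0 \log(n/m)$ for some constant $K_0$, and the claim follows with $K = K_0$ and $M$ chosen large enough that $m\phi_m > 2$, $n > m$, and the $\log 2$ correction terms above are absorbed. The main technical point is simply the calibration of $n$ to $m$: if $n/m$ is too small, $\log(n/m)$ cannot absorb $C\phi_m$, while if $n/m$ is too large, the factor $(n/m)^{2/\phi_m}$ blows up. The choice $n \asymp me^{\phi_m}$ hits the exact sweet spot where $\log(n/m) \asymp \phi_m$ while $(n/m)^{2/\phi_m}$ stays bounded.
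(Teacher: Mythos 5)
There is a genuine gap at the step where you claim $(\eta_a)_n \le \big(\tfrac{m}{n}\big)^{1-2/\phi_m}(\eta_a)_m$ ``by Lemma \ref{L:ratio}(ii)''. That lemma, for a nondecreasing minorant $\phi\le r(\xi)$, controls the decay of the \emph{mean} $\xi_a$ of the sequence whose ratio of regularity dominates $\phi$. Since your $\phi$ is a minorant of $r(\eta_a)$, the only instance the lemma licenses is $\xi=\eta_a$, which gives a bound on $(\eta_{a^2})_n$ in terms of $(\eta_{a^2})_m$ --- not a bound on $(\eta_a)_n$. To bound $(\eta_a)_n$ in the way you use it, you would need $\phi\le r(\eta)$, and this does not follow from $\phi\le r(\eta_a)$: the two ratios are not pointwise comparable. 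For instance, if $\eta$ equals $1$ on $[1,N]$ and then equals $N/n_0$ out to an index $n_0\gg N$, then at $j=n_0$ one has $r(\eta)_{j}\approx 2$ while $r(\eta_a)_{j}\approx \tfrac12\log(n_0/N)$, so a minorant of $r(\eta_a)$ can far exceed $r(\eta)$ at the relevant indices. Consequently the hypothesis gives you no pointwise control on the decay of $\eta_a$ between $m$ and your chosen $n\approx me^{\phi_m}$, and the term $(H_n-H_m)(\eta_a)_n$ coming from Lemma \ref{L:ineq}(ii) cannot be estimated as you claim; the displayed bracket inequality is unjustified.

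The good news is that the detour through Lemma \ref{L:ineq}(ii) is unnecessary, and dropping it removes the gap; this is exactly the paper's argument. Apply Lemma \ref{L:ratio}(ii) directly with $\xi=\eta_a$: for $m$ with $m\phi_m>2$ and all $n\ge m$, $(\eta_{a^2})_n\le \big(\tfrac{m}{n}\big)^{1-2/\phi_m}(\eta_{a^2})_m \le C\phi_m\,\tfrac{m}{n}\,e^{\frac{2}{\phi_m}\log\frac{n}{m}}\,(\eta_a)_m$, where the second inequality uses $(\eta_{a^2})_m=r(\eta_a)_m(\eta_a)_m\le C\phi_m(\eta_a)_m$. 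Now your own calibration $n:=[me^{\phi_m}]$ (so that $\phi_m\asymp\log\tfrac{n}{m}$ while $e^{\frac{2}{\phi_m}\log\frac{n}{m}}$ stays bounded) yields $(\eta_{a^2})_n\le K\,\tfrac{m}{n}\log\tfrac{n}{m}\,(\eta_a)_m$, which is the assertion of the lemma. Your preliminary reduction to a nondecreasing $\phi$ (using $r(\eta_a)\ge 1$) and the choice of $M$ with $M\phi_M>2$ are fine as written; only the intermediate bound on $(\eta_a)_n$ must be excised.
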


\begin{proof}
Assume without loss of generality that $r(\eta_a)$ is equivalent to a monotone nondecreasing sequence $\phi$ and assume for simplicity's sake that $ \phi \le r(\eta_a)\le \beta\phi$ for some $\beta >0$. 
Let $M\in \mathbb N$ be an integer for which $M\phi_M > 2$. Then for all $n \ge m\ge M$, by Lemma \ref{L:ratio}(ii) applied to $\eta_a$,
\[
(\eta_{a^2})_n \le \big(\frac{m}{n}\big)^{1- \frac{2}{\phi_m}}(\eta_{a^2})_m\
\le \beta \phi_m \big(\frac{m}{n}\big)^{1- \frac{2}{\phi_m}}(\eta_a)_m
= \beta \phi_m \frac{m}{n}\,e^{\frac{2 \log \frac{n}{m}}{\phi_m}}(\eta_a)_m.
\]
Choosing $n:=[me^{\phi_m}]$ one has $\frac{n}{m}\le e^{\phi_m} < \frac{n+1}{m}$, hence $\phi_m < \log \frac {n+1}{m} < 2\log\frac {n}{m} \le 2\phi_m$. 
Thus
\[
(\eta_{a^2})_n \le 2\beta e^2\, \frac{m}{n} \log\frac {n}{m}\,(\eta_a)_m.
\]
\end{proof}

\section{\leftline{\bf Arithmetic mean cancellations of second 
order}} \label{S: 4}

First order lower arithmetic mean cancellation characterizes am-closed ideals, i.e., ideals $I$ for which $I= I^-:=\,_a( I_a)$.
Indeed for a fixed ideal $I$, 

\[ J_a \subset I_a \Rightarrow J \subset I \quad \text{if and only if}\quad I= \,_a(I_a) \quad \text{(\cite[Lemma 6.1(C)]{vKgW04-Density}).} \]

The second order analog of this property involves second order am-closure, $_{a^2}( I_{a^2})$.

\begin{proposition}\label{P: closures} For a fixed ideal $I$, 
$J_{a^2} \subset I_{a^2} \Rightarrow J_a \subset I_a$  if and only if $I^- = \,_{a^2}( I_{a^2})$.
\end{proposition}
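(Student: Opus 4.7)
The plan is to translate both the hypothesis $J_{a^2} \subset I_{a^2}$ and the conclusion $J_a \subset I_a$ into inclusions in which the arithmetic mean appears only on the right-hand side, then to quantify over $J$. The key equivalences, both direct consequences of the 5-chain and the (higher order) idempotence identities recorded in Section \ref{S: 2}, are:
\begin{equation*}
L_a \subset K_a \Leftrightarrow L \subset K^-, \qquad L_{a^2} \subset K_{a^2} \Leftrightarrow L \subset {}_{a^2}(K_{a^2}),
\end{equation*}
valid for any ideals $L$ and $K$. For the first, the forward direction uses $L \subset L^- = {}_a(L_a) \subset {}_a(K_a) = K^-$, and the reverse uses the identity $(K^-)_a = K_a$; the second is strictly analogous, with the higher order identity $({}_{a^2}(K_{a^2}))_{a^2} = K_{a^2}$ playing the corresponding role in the reverse direction and the 5-chain inclusion $L \subset {}_{a^2}(L_{a^2})$ in the forward direction.

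Given these equivalences, the statement ``$J_{a^2} \subset I_{a^2} \Rightarrow J_a \subset I_a$ for all ideals $J$'' becomes ``$J \subset {}_{a^2}(I_{a^2}) \Rightarrow J \subset I^-$ for all $J$'', which in turn (by setting $J = {}_{a^2}(I_{a^2})$, for which the hypothesis is automatic) collapses to the single inclusion ${}_{a^2}(I_{a^2}) \subset I^-$. The reverse inclusion $I^- \subset {}_{a^2}(I_{a^2})$ always holds, being part of the increasing nest of higher order am-closures noted in Section \ref{S: 2}; it may also be extracted directly from the second equivalence above, since $(I^-)_a = I_a$ immediately yields $(I^-)_{a^2} = I_{a^2}$. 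Combining, the single inclusion ${}_{a^2}(I_{a^2}) \subset I^-$ is equivalent to the asserted equality $I^- = {}_{a^2}(I_{a^2})$, which proves the proposition.

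The argument is essentially a formal manipulation of the 5-chain identities, so no genuine obstacle is anticipated; the only place requiring mild care is the second equivalence, which depends on the higher order idempotence identity $({}_{a^2}(K_{a^2}))_{a^2} = K_{a^2}$ rather than just on its first order counterpart.
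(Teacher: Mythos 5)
Your proposal is correct and takes essentially the same route as the paper's proof: both arguments are formal manipulations of the 5-chain inclusions together with the idempotence identities $({}_a(K_a))_a = K_a$ and $({}_{a^2}(K_{a^2}))_{a^2} = K_{a^2}$, with $J = {}_{a^2}(I_{a^2})$ serving as the decisive test ideal for the necessity direction. Your repackaging of the two directions as a pair of adjunction-type equivalences ($L_a \subset K_a \Leftrightarrow L \subset K^-$ and its second order analog) is a tidy reorganization, not a genuinely different method.
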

\begin{proof}
The condition is necessary. Indeed, from the general identity $I_{a^2} = (_{a^2}(I_{a^2}))_{a^2}$ (see Section \ref{S: 2}), 
it follows from the hypothesis that $ (_{a^2}(I_{a^2}))_a \subset I_a$ and hence
\[
I^- = \,_a(I_a) \supset \, _a((_{a^2}(I_{a^2}))_a) = \, _a((_a[_a(I_{a^2})])_a) = \,_a[_a(I_{a^2})] = \,_{a^2}( I_{a^2}) \supset I^-
\]
where the third equality holds from the general identity $\,_a((_aL)_a) = \,_aL\,$  and the last inclusion also holds for any ideal.
Conversely, if $I^- = \,_{a^2}( I_{a^2})$ and  $J_{a^2} \subset I_{a^2}$, then $J\subset \,_{a^2}(J_{a^2})\subset \,_{a^2}(I_{a^2})$ and hence
  \[
J_a \subset (_{a^2}(I_{a^2}))_a = (I^-)_a = (_a(I_a))_a = I_a.
\]
\ep

\bR{R: 2nd clos}
\item[(i)] The last step in the above proof shows that if $I^- = \,_{a^2}( I_{a^2})$, then $I_a = (I_a)^{-o}$. 
It is easy to see that the converse also holds, i.e., that $I^- = \,_{a^2}( I_{a^2})$ if and only if  $I_a = (I_a)^{-o}$.
\item[(ii)]  
Since $I\subset I^-\subset \,_{a^2}(I_{a^2})$ for arbitrary ideals $I$, if $I=\,_{a^2}(I_{a^2})$ then $I^-=\,_{a^2}(I_{a^2})$.
The converse implication fails in general. 
For instance, if $L$ is the countably generated ideal provided by \linebreak
\cite[Example 5.5]{vKgW04-Density} for which $L\subsetneq L_a = L_{a^2}$, 
then $_{a^2}(L_{a^2})=L_a$ and hence $L \ne \,_{a^2}(L_{a^2})$ but $L^- = L_a =\, _{a^2}(L_{a^2})$.
Furthermore, the converse implication can fail even when $I$ is principal.
Indeed, if $I=(\eta)$ is not am-stable, then by \cite[Theorem 2.11]{vKgW04-Soft}, it is not am-closed, hence $I \ne \,\,_{a^2}(I_{a^2})$ 
(recall that $_aJ$ is am-closed for every ideal $J$).
However by Theorem \ref{T: monotone}(i) below, if $r(\eta_a)$ is equivalent to a monotone sequence (e.g., $\eta = \omega$ hence $r(\eta_a) \asymp \log$), 
then $I^-=\,_{a^2}(I_{a^2})$ by Proposition \ref{P: closures}. 

\eR

The condition  $I^- = \,_{a^2}( I_{a^2})$ is not very ``transparent", 
not even for principal ideals.  A natural question is whether this 
condition might be automatically satisfied for all ideals. 
As is easy to see (cfr. proof of Theorem \ref {T: monotone} below), 
if second order inclusion cancellation were to hold for all pairs of principal 
ideals it would  hold also for all pairs of general ideals.
Furthermore, if equality cancellation were to hold for all pairs of 
principal ideals, inclusion cancellation would then  also hold for 
all pairs of principal ideals. 
Indeed, given two principal ideals $I$ and $J$ 
and setting $L:=I+J$, we see that  $J_{a^2} \subset I_{a^2}$ is equivalent to $L_{a^2} = I_{a^2}$ which would then imply $L_a=I_a$ and hence $I_a\supset J_a$.  \\

It is trivial to see that first order equality cancellation 
does not hold for all pairs of principal ideals, e.g., because all 
nonzero principal ideals contained in the trace class have the same 
arithmetic mean ideal $(\omega)$.

Whether second order equality cancellation holds for all pairs of 
principal ideals or not is indeed a reformulation of a question asked by M. Wodzicki. 
The following example answers this question in the negative.\\

We need first the following identities for ``step sequences." \\

Let $\zeta\in \co*$ be a step sequence based on a strictly 
increasing sequence of indices $m_k$ starting with $m_0=0$,
i.e., $\zeta_j= \epsilon_k $ for $m_k< j \le m_{k+1} $ for some 
strictly decreasing sequence $\epsilon_k \to 0$.
We will need the following formulas for the sequences $\zeta_a$ and 
$\zeta_{a^2}$:
\[
j(\zeta_a)_j= \sum_1^j\zeta_j=
\begin{cases}
j\epsilon_o \quad\quad\quad & \text{ for $0< j \le m_1 $}\\
m_k(\zeta_a)_{m_k} +(j-m_k)\epsilon_k & \text{ for $m_k< j \le 
m_{k+1} $ and $k\ge 1$}
\end{cases}
\]
hence
\begin{equation}\label{xia}
(\zeta_a)_j=
\begin{cases}
\epsilon_o \quad\quad\quad & \text{ for $0< j \le m_1 $}\\
\frac{m_k}{j}((\zeta_a)_{m_k} -\epsilon_k) + \epsilon_k &\text{ for 
$m_k< j \le m_{k+1}$ and $k\ge 1.$}
\end{cases}
\end{equation}
Consequently, for $k>0$ and $m_k< j \le m_{k+1}$,
\begin{align*}
j(\zeta_{a^2})_j&= m_k(\zeta_{a^2})_{m_k} + \sum_{i=m_k+1}^j  \big( 
\frac{m_k}{i}((\zeta_a)_{m_k} -\epsilon_k )+\epsilon_k\big) \\
&= m_k(\zeta_{a^2})_{m_k} +  m_k\big((\zeta_a)_{m_k} - 
\epsilon_k)(H_j-H_{m_k}\big) + (j-m_k)\epsilon_k,
\end{align*}
and for $k=0$ and $0< j \le m_1$, $ j(\zeta_{a^2})_j=j\epsilon_0$. Therefore
\begin{equation}\label{xiaa}
(\zeta_{a^2})_j =
\begin{cases}
\epsilon_o \quad\quad\quad & \text{ for $0< j \le m_1 $}\\
\frac{m_k}{j}\Big((\zeta_{a^2})_{m_k} - \epsilon_k +((\zeta_a)_{m_k} 
- \epsilon_k)(H_j-H_{m_k})\Big) +\epsilon_k &\text{ for $m_k< j \le 
m_{k+1}$. }
\end{cases}
\end{equation}

\

\begin{example}\label{E: Example 6}
Principal ideals $J \subset I $ for which $J_{a^2}= I_{a^2}$ but $J_a\ne I_a$.\\
\end{example}
\begin{proof}[Construction]
To define the principal ideals it suffices to provide sequences $\xi$ and $\eta$ in $\co*$ to generate them with the properties that $\xi \le \eta$ and  
$\xi_{a^2} \asymp \eta_{a^2}$ but for which $\eta_a \ne O(\xi_a)$ (see Section \ref{S: 2} on principal ideals). 
Construct inductively an increasing sequence of positive integers $m_k$ with $m_1=1$,
then define the sequence of indices $n_k := [e^{k^2}m_k]$ and  the 
sequence $\delta_k$ defined recursively from $\delta_1=1$ and 
$\delta_{k+1}= e^{-k^2}\delta_k$, i.e.,  $\delta_k 
=e^{-\sum_{p=1}^{k-1}p^2}$ for $k>1$. Now, using the sequences $m_k$, 
$n_k$, and $\delta_k$,  define the two monotone sequences $\xi$ and 
$\eta$  setting $\xi_1=\eta_1 = 1$ and for every $k\ge 1$,
\[
\xi_j:= e^{-k^2}\delta_k ~\text{ for }~ m_k < j \le m_{k+1}
\]
and
\[
\eta_j :=
\begin{cases}
ke^{-k^2}\delta_k 	&\text {for  $m_k < j \le n_k$} \\
e^{-k^2}\delta_k	&\text{for $n_k < j \le  m_{k+1}.$}
\end{cases}
\]
Notice that   $ \eta_{m_k} = \xi_{m_k}=\delta_k$ for every $k$. 
Clearly, $\xi \le \eta$ and both sequences are in  $\co*$.
Assume the construction of the sequence $m_k$ up to $k\ge 1$ and 
choose $m_{k+1}> n_k$ and sufficiently large to insure that $(\eta_{a^2})_{m_{k+1}} \le 
(1+ \frac{1}{k})e^{-k^2}\delta_k$, which can be achieved using 
Equation (\ref{xiaa}).
As a consequence,
\begin{equation}\label{clock}
\delta_k =  \eta_{m_k} = \xi_{m_k}\le (\xi_a)_{m_k} \le 
(\eta_a)_{m_k} \le (\eta_{a^2})_{m_k}  \le (1+ \frac{1}{k})\delta_k 
~\text{~and~}~
\end{equation}
\begin{equation*}
\delta_k \le (\xi_a)_{m_k} \le(\xi_{a^2})_{m_k}  \le (1+ \frac{1}{k})\delta_k
\end{equation*}
  and hence
  \begin{equation}\label{asympt}
(\xi_a)_{m_k}  \sim (\eta_a)_{m_k}
\sim (\xi_{a^2})_{m_k}  \sim 
(\eta_{a^2})_{m_k}\sim \delta_k.
  \end{equation}
 From (\ref{xia}), (\ref{asympt}) and
\begin{equation}\label{e:n/m}
\frac{m_k}{n_k} = e^{-k^2}\frac {e^{k^2}m_k}{[e^{k^2}m_k]} \sim e^{-k^2}
\end{equation}
we have
\begin{equation}\label{e:etaan}
\frac{(\xi_a)_{n_k}}{\delta_k} = \frac{m_k}{n_k} 
\big(\frac{(\xi_a)_{m_k}}{\delta_k} -  e^{-k^2}\big)+e^{-k^2}  \sim 2 
e^{-k^2}
\end{equation}
and
\begin{equation}\label{e:xian}
\frac{(\eta_a)_{n_k}}{\delta_k} = \frac{m_k}{n_k} 
\big(\frac{(\xi_a)_{m_k}}{\delta_k} -  ke^{-k^2}\big)+ke^{-k^2}  \sim 
k e^{-k^2}.
\end{equation}
As a consequence, $\eta_a \ne O(\xi_a)$, and hence $(\xi)_a\ne (\eta)_a$.

 From (\ref{xiaa}), (\ref{asympt}) and (\ref{e:n/m}) we have
\be{e: asym xi}
\begin{aligned}
\frac{(\xi_{a^2})_{n_k}}{\delta_k} &= \frac{m_k}{n_k} 
\big\{\frac{(\xi_{a^2})_{m_k}}{\delta_k} -  e^{-k^2}
+ (\frac{(\xi_{a})_{m_k}}{\delta_k} -  e^{-k^2} )(H_{n_k}-H_{m_k}) 
\big\}+e^{-k^2}\\
&\sim    e^{-k^2}(1 + \log\frac{n_k}{m_k} ) +e^{-k^2} \sim k^2e^{-k^2}
\end{aligned}
\ee
and
\be{e: asym eta}
\begin{aligned}
\frac{(\eta_{a^2})_{n_k}}{\delta_k} &= \frac{m_k}{n_k} 
\big\{\frac{(\eta_{a^2})_{m_k}}{\delta_k} -  ke^{-k^2}
+ (\frac{(\eta_{a})_{m_k}}{\delta_k} -  ke^{-k^2} )(H_{n_k}-H_{m_k}) 
\big\}+ke^{-k^2}\\
&\sim    e^{-k^2}(1 + \log\frac{n_k}{m_k} ) +ke^{-k^2} \sim k^2e^{-k^2}
\end{aligned}
\ee
As an aside relating to the sufficient condition in Theorem \ref{T: monotone} below we note the equalities 
$\limsup \frac{\xi_{a^2}}{\xi_a} =\limsup \frac{\eta_{a^2}}{\eta_a} = \infty$, while from (\ref{asympt}), 
$\liminf \frac{\xi_{a^2}}{\xi_a} =\liminf  \frac{\eta_{a^2}}{\eta_a} = 1$, 
and hence neither $\frac{\xi_{a^2}}{\xi_a}$ nor $\frac{\eta_{a^2}}{\eta_a}$ are equivalent to a monotone sequence.
The conclusions of this example together with Theorem \ref{T: monotone} of course also implies this.

Now we give the crux of the proof, that $(\eta_{a^2})_j \le 2 (\xi_{a^2})_j$ for $j$ sufficiently large, 
implying that $(\xi)_{a^2}= (\eta)_{a^2}$.
If $m_k < j \le n_k$, from (\ref{xiaa})-({\ref{clock}}) we have for all $k\ge2$, 

\begin{align*}
&\frac{2(\xi_{a^2})_j  - (\eta_{a^2})_j }{\delta_k}= \\
&\frac{m_k}{j}\Big\{\frac{2(\xi_{a^2})_{m_k}-(\eta_{a^2})_{m_k}}{\delta_k} +(k-2)e^{-k^2}
+\big(\frac{2(\xi_a)_{m_k}-(\eta_a)_{m_k}}{\delta_k} + (k-2)e^{-k^2}\big)(H_j-H_{m_k})\Big\}\\
&+(2-k)e^{-k^2}\\
&\ge 
\frac{m_k}{j}\Big\{\frac{2(\xi_{a^2})_{m_k}-(\eta_{a^2})_{m_k}}{\delta_k} 
+
\frac{2(\xi_a)_{m_k}-(\eta_a)_{m_k}}{\delta_k} (H_j-H_{m_k}) 
-ke^{-k^2}\frac{j}{m_k}\Big\}\\
&\ge \frac{m_k}{j}\Big\{1- \frac{1}{k} + (1- 
\frac{1}{k})(\log\frac{j}{m_k} -  \frac{1}{m_k+1}\ 
)-ke^{-k^2}\frac{j}{m_k}\Big\}\\
&\ge \frac{m_k}{j}\Big\{\frac{1}{2}(\frac{1}{2}+ \log\frac{j}{m_k}) 
-ke^{-k^2}\frac{j}{m_k}\Big\},
\end{align*}
where the second last inequality follows from  (\ref{clock}) and (\ref{e: Hn-Hm}).
Notice that $1 < \frac{j}{m_k}\le \frac{n_k}{m_k} \le e^{k^2}$ and then
elementary calculus shows that the function $\phi(x):= \frac{1}{4} + \frac{1}{2} \log\,x -ke^{-k^2}x$ attains its absolute minimum on the 
interval $[1, e^{k^2}]$ for $x=1$ and $\phi(1) = \frac{1}{4} - ke^{-k^2} >0$.
Thus $2(\xi_{a^2})_j  > (\eta_{a^2})_j$ for all $m_k < j \le n_k$.
For $n_k < j \le m_{k+1}$, again from (\ref{xiaa}) we have
\be{e:ineq}
\begin{aligned}
&\frac{2(\xi_{a^2})_j  - (\eta_{a^2})_j }{\delta_k}= \\
&=\frac{n_k}{j}\Big (\frac{2(\xi_{a^2})_{n_k}-(\eta_{a^2})_{n_k}}{\delta_k} 
-e^{-k^2}+(\frac{2(\xi_a)_{n_k}-(\eta_a)_{n_k}}{\delta_k} 
-e^{-k^2})(H_j-H_{n_k})\Big) +e^{-k^2}.
\end{aligned}
\ee
Since $\frac{2(\xi_{a^2})_{n_k}  - (\eta_{a^2})_{n_k} }{\delta_k} \sim k^2e^{-k^2}$ by (\ref{e: asym xi})-(\ref{e: asym eta})
and $\frac{2(\xi_a)_{n_k}  - (\eta_a)_{n_k} }{\delta_k} \sim -ke^{-k^2}$ by (\ref{e:etaan})-(\ref{e:xian}), for $k$ sufficiently large we have
$\frac{2(\xi_{a^2})_{n_k} -(\eta_{a^2})_{n_k} }{\delta_k} \ge \frac{1}{2} k^2e^{-k^2}$ and $\frac{2(\xi_a)_{n_k} - (\eta_a)_{n_k} }{\delta_k} \ge -2ke^{-k^2}.$ 
Thus from (\ref{e:ineq}) and (\ref{e: Hn-Hm}), for $k$ sufficiently large,

\begin{align*}
\frac{2(\xi_{a^2})_j  - (\eta_{a^2})_j }{e^{-k^2}\delta_k}&\ge
\frac{n_k}{j}\Big\{\frac{1}{2}k^2 -1 + ( -2k -1)(H_j- H_{n_k}) 
+\frac{j}{n_k} \Big\}\\
&\ge
\frac{n_k}{j}\Big\{\frac{1}{3}k^2  -3k \log\,\frac{j}{n_k} 
+\frac{j}{n_k} \Big\}.
\end{align*}

Again, elementary Calculus shows that the function $\psi(x):= \frac{1}{3}k^2  -3k \log\, x +x$
attains its absolute minimum  on the interval $[1, \infty)$ for $x=3k$ and $\psi(3k) > 0$ when $k$ is sufficiently large. 
Thus for large $k$, $2(\xi_{a^2})_j \ge (\eta_{a^2})_j$ also for all $n_k < j \le m_{k+1}$, which completes the proof.\\
\end{proof}

\bR{R: Example 6} The construction in Example \ref {E: Example 6} illustrates some features of the behavior of  ``step sequences," 
their first order and second order arithmetic means. On a long interval of constancy of a sequence $\zeta$, both the first and second order 
means $\zeta_a$ and $\zeta_{a^2}$ approach the value of the sequence $\zeta$, with $\zeta_a$ approaching it faster than $\zeta_{a^2}$. 
This ``resetting the clock" greatly simplifies the computations.

Following a step down, $\zeta_{a^2}$ decreases much slower than  $\zeta_a$. 
Thus in the construction of the example, by having different step sizes for  $\eta$ and $\xi$ it would have been relatively straightforward 
to achieve large ratios for $\frac{\eta_a}{\xi_a}$. 
The delicate point was to do so while simultaneously keeping the difference $2\xi_{a^2}- \eta_{a^2}$ positive not only on the first part of the interval but on the second one as well, where the two sequences are equal (which was not automatic due to the delay in the decrease of  $\eta_{a^2}$).
\eR

A consequence of Example  \ref{E: Example 6} is that not all ideals, and not even all principal ideals, satisfy the necessary and sufficient 
condition $I^- = \,_{a^2}( I_{a^2})$ for the cancellation  $J_{a^2} \subset I_{a^2} \Rightarrow J_a \subset I_a$ to hold (Proposition \ref {P: closures}). However we are still left wanting a more usable conditions. 
Clearly, am-stability is trivially  sufficient even for general ideals, since in this case $I= I_a =\,_aI$ and hence $I=I^- = \,_{a^2}( I_{a^2})$. 
It is, however, far from necessary. 
Indeed a much more general sufficient condition is provided by the following theorem.

\bT{T: monotone}
\item[(i)] Let $I = (\eta)$ be principal ideal, let $r(\eta_a)$ be equivalent to a monotone sequence and let $J$ be an arbitrary ideal. 
Then $J_{a^2} \subset I_{a^2} \Rightarrow J_a \subset I_a$.
\item[(ii)] A sufficient condition on a general ideal $I$ for the second order am-inclusion cancellation implication in (i) to hold for 
arbitrary ideals $J$, is that every $\mu \in \Sigma(I)$ is dominated by some $\eta \in \Sigma(I)$ (i.e., $\mu \le \eta$) for which 
$r(\eta_a)$ is equivalent to a monotone  sequence.
\eT
\begin{proof}
\item[(i)]
It suffices to prove the cancellation property for the case that $J$ itself is principal. 
Indeed, if $J$ is a general ideal with $J_{a^2} \subset I_{a^2}$ and if $\rho \in \Sigma (J_a)$, i.e., $\rho \le \xi_a$ for some $\xi\in\Sigma(J)$, 
then $(\xi)_{a^2} \subset J_{a^2} \subset I_{a^2}$. 
We claim that $(\xi_a) \subset I_a$, whence $(\rho) \subset I_a$ and hence $J_a \subset I_a$ by the arbitrariness of $\rho$.

Since $I = (\eta)$ is principal, so are $I_a$ and $I_{a^2}$, indeed $I_a= (\eta_a)$ and $I_{a^2}= (\eta_{a^2})$. 
Thus the inclusions $(\xi_a) \subset I_a$ (resp., $(\xi)_{a^2} \subset I_{a^2}$) are equivalent to the conditions  $\xi_a = O( \eta_a)$ 
(resp., $\xi_{a^2} = O( \eta_{a^2})$).  
Thus, to prove the claim, it suffices to prove that if $\frac{\xi_a}{\eta_a}$ is unbounded, then so is $\frac{\xi_{a^2}}{\eta_{a^2}}$.  
By Lemma \ref{L: monotone}, there are constants $K, M >0$ for which if, $m \ge M$, then
\[
(\eta_{a^2})_{n_m} \le K\frac{m}{{n_m}}\log\frac{n_m}{m} (\eta_a)_m \quad \text{ for some $n_m > m$}.
\]
  By Lemma \ref {L:ineq}(ii) and (\ref{e: Hn-Hm}),
\[
(\xi_{a^2})_{n_m} \ge \frac{m}{n_m}(H_{n_m}-H_m)(\xi_a)_m > \frac{m}{n_m}\log\frac {n_m+1}{m+1}(\xi_a)_m \ge \frac{1}{2}\frac{m}{n_m}\log\frac {n_m}{m}(\xi_a)_m.
\]
Thus
\[
\Big(\frac{\xi_{a^2}}{\eta_{a^2}}\Big)_{n_m} \ge \frac{1}{2K} \Big(\frac{\xi_a}{\eta_a}\Big)_m.
\]
Hence the unboundedness of $\frac{\xi_a}{\eta_a}$ implies the unboundedness of $\frac{\xi_{a^2}}{\eta_{a^2}}$.
\item[(ii)]  Assume that  $J_{a^2} \subset I_{a^2}$ and that  $\rho \in \Sigma (J_a)$, i.e., $\rho \le \xi_a$ for some $\xi\in\Sigma(J)$. 
Then $\xi_{a^2}\in\Sigma(I_{a^2})$, i.e.,  $\xi_{a^2} = O(\mu_{a^2} )$ for some $\mu \in \Sigma(I)$, 
hence $\xi_{a^2} = O(\eta_{a^2} )$ for some $\eta\in \Sigma(I)$ for which $r(\eta_a)$ is equivalent to a monotone sequence. 
By (i), the inclusion $( \xi_{a^2} )\subset (\eta_{a^2} )$ implies the inclusions $(\rho)\subset ( \xi_a)\subset (\eta_a)\subset I_a$. 
By the arbitrariness of $\rho$, we conclude that $J_a\subset I_a$.
\end{proof}

We do not know if the condition in part (i), that $r(\eta_a)$ is equivalent to a monotone sequence, 
is necessary for the inclusion cancellation in Theorem \ref{T: monotone}. 
However, we see from Example \ref{e: omega1/2} below that it is not sufficient for the second order reverse inclusion cancellation. 
Sufficient conditions for that cancellation to hold are given by the following theorem.

\bT{T: hat or log}
\item[(i)] Let $I = (\eta)$ be a principal ideal, let $r(\eta_a)\asymp \log$ or $\eta \asymp \widehat \eta$ (see end of Section \ref{S: 2}), and let $J$ be an arbitrary ideal. 
Then $J_{a^2} \supset I_{a^2} \Rightarrow J_a \supset I_a$.
\item[(ii)] A sufficient condition on a general ideal $I$ for the second order inclusion am-cancellation implication in (i) to hold for arbitrary ideals $J$ is that every $\mu \in \Sigma(I)$ is dominated by some $\eta \in \Sigma(I)$ for which $r(\eta_a) \asymp \log$ or $\eta \asymp \widehat \eta$.
\eT

\begin{proof}
\item[(i)] As in the proof of Theorem \ref{T: monotone}(i), we reduce the proof to the case where $J$ also is principal. 
Indeed, $I = (\eta)$ and $I_{a^2}\subset J_{a^2}$ implies that $\eta_{a^2} \le \xi_{a^2} $ for some $\xi \in \Sigma(J)$. 
In the case when $r(\eta_a)\asymp \log$, by Lemma \ref {L: H_n-a^2/a uparrow}(i) and (\ref{e: Hn-Hm}),
\[
(\eta_a)_n\le K \frac{(\eta_{a^2})_n}{\log n} \le K \frac{(\xi_{a^2})_n}{\log n} =K\frac{r(\xi_a)}{\log n}(\xi_a )_n \le K\frac{H_n}{\log n} (\xi_a )_n
\le 2K (\xi_a )_n
\]
for some $K>0$ and all $n > 2$.
In the case when $\eta \asymp \widehat \eta $, by \cite[Proposition 6.17]{vKgW04-Density}, $\eta$ is regular.
Then by [ibid., Definition 6.10 and Corollary 6.15], $(\eta_{a^2}) \subset (\xi_{a^2})$ implies $\widehat{(\eta_a)} \subset (\xi_a)$ and so $(\eta_a) = (\eta) = (\widehat{\eta}) = \widehat{(\eta)} = \widehat{(\eta_a)} \subset (\xi_a)$.
In either case, this shows that $I_a=(\eta_a)\subset (\xi_a)\subset J_a$.
\item[(ii)]  For every $\mu\in\Sigma(I_a)$, there is a $\xi \in \Sigma(I)$ for which $\mu \le \xi_a$. 
By the hypothesis, there is also an $\eta \in \Sigma(I)$ with $\xi \le \eta_a$ for which $r(\eta_a) \asymp \log$ or $\eta \asymp \widehat \eta$. 
Since $(\eta)_{a^2}\subset I_{a^2} \subset J_{a^2}$, by (i) we obtain $(\mu)\subset (\xi_a) \subset (\eta)_a \subset J_a$. 
Since $\mu$ is arbitrary, we conclude that $I_a \subset J_a$.
\ep

As mentioned earlier, one sees from the last example listed in Example \ref {E: Example 6} that the condition that $r(\eta_a)$ be equivalent to a monotone  sequence (as in Theorem \ref{T: monotone}) is more general than that $r(\eta_a)$ be equivalent to $log$ or $1$ (as in Theorems \ref{T: hat or log} and \ref{T: equality cancel}). 
\\

Notice that neither the condition that $r(\eta_a)$ is equivalent to a monotone sequence, nor the more restrictive condition that 
$r(\eta_a)$ is bounded, i.e.,  that $\eta$ is regular, are sufficient for the cancellation in part (i) to hold.
Recall that $(\omega^p)$ is am-stable for all $0<p<1$ but $(\omega^p)\ne \widehat{(\omega^p)}= (\omega^{p'})$ for $\frac{1}{p}- \frac{1}{p'} = 1$ by 
\cite[Corollary 6.16]{vKgW04-Density}. 
Thus we see that \[(\omega^{1/2})=(\omega^{1/2})_a \subset (\xi)_a \not\Rightarrow (\omega^{1/2}) \subset (\xi).\] 
The analogous cancellation for second order ideals is a priori different, although the same conclusion holds.

\begin{example}\label{e: omega1/2}  
A principal ideal $(\xi)$ for which $(\omega^{1/2})_{a^2}\subset (\xi)_{a^2}$ but for which $(\omega^{1/2}) _{a}\not\subset (\xi)_{a}$.\\
\end{example}

\begin{proof}[Construction]
We construct inductively an increasing sequence of positive integers $m_k$ starting with $m_1=0$ and define $\xi_1:=1$ and $\xi_j = \epsilon_k :=\frac{1}{m_k}$ for 
$m_k<j \le m_{k+1}$ for all $k > 1$.
Assume the construction up to $m_k$.  
Choose $m_{k+1} > e^{2m_k}$ and so that $(\xi_{a^2})_{m_{k+1}} \le \frac{1+ 1/k}{m_k}$.
This can be achieved using Equation (\ref{xiaa}) by choosing $m_{k+1}$ sufficiently large.
Then
\[
\frac{1}{m_k}= \xi_{m_{k+1}} \le (\xi_a)_{m_{k+1}}\le 
(\xi_{a^2})_{m_{k+1}} \le \frac{1+ 1/k}{m_k}.
\]
For $m_k < j \le m_{k+1}$ it follows from (\ref{xia}) that
\[
  \frac{m_k}{j}\big(\frac{1}{m_{k-1}} - \frac{1}{m_k}\big)+ \frac{1}{m_k}
 \le (\xi_a)_j
 \le \frac{m_k(1+\frac{1}{k-1})}{jm_{k-1}} + \frac{1}{m_k},
\]
from which follows, as $j$ increases, the asymptotic
\[
(\xi_a)_j \sim \frac{m_k}{jm_{k-1}} + \frac{1}{m_k}.
\]
In particular, if $j= [\frac{m_k^2}{m_{k-1}}]$, then $(\xi_a)_j \sim 
\frac{2}{m_k} $, while $(\omega^{1/2})_j \sim 
\frac{{m_{k-1}^{1/2}}}{m_k} $.
But then $\omega^{1/2} \ne O(\xi_a)$ and hence $(\omega^{1/2}) _{a}= 
(\omega^{1/2})\not\subset (\xi)_{a}$.
We now  show  that $\xi_{a^2} \ge \omega^{1/2} $ so $(\omega^{1/2}) 
_{a^2}=(\omega^{1/2})\subset (\xi)_{a^2}$.
When $m_k < j \le m_{k+1}$ and $k \ge 2$, by (\ref{xiaa}) and (\ref{e: Hn-Hm}),
\begin{align*}
j(\xi_{a^2})_j - j\omega^{1/2}_j &= {m_k}\Big((\xi_{a^2})_{m_k} - 
\frac{1}{m_k} +((\xi_a)_{m_k} - \frac{1}{m_k})(H_j-H_{m_k})\Big) 
+\frac{j}{m_k} -  j^{1/2}\\
&\ge  {m_k}(\frac{1}{m_{k-1}} - \frac{1}{m_k})(1 + 
\log\frac{j+1}{m_k+1}) + \frac{j}{m_k} -j^{1/2}\\
&\ge \frac{m_k}{2m_{k-1}}(1+ \frac{1}{2}\log\frac{j}{m_k}) + 
\frac{j}{m_k} - j^{1/2}.
\end{align*}
  Define the function $\phi(x) := \frac{m_k}{2m_{k-1}} 
\big(1+\frac{1}{2}\log\frac{x}{m_k}\big) + \frac{x}{m_k} -   x^{1/2}$ 
for $x\ge m_k$.
  Elementary calculus and the quadratic form in $\frac{1}{\sqrt x}$ of 
$\phi'$ with two real roots shows that the function $\phi$ has an absolute minimum on the interval 
$[m_k , \infty)$ at $x_{m_k}:= \big(\frac{m_k}{4}(1+\sqrt{1- \frac{4}{m_{k-1}}})\big)^2$ and a direct computation shows that $\phi(x_{m_k}) > 0$ for $k$ sufficiently large, because of the assumption that $m_k \ge e^{2m_{k-1}}$.
This proves that $\omega^{1/2}_j \le (\xi_{a^2})_j  $ for all $j$ and hence $(\omega^{1/2})\subset (\xi)_{a^2}$.
\ep

For equality cancellation we can slightly relax the sufficient conditions 
for the principal ideal case from those of the general case.

\bT {T: equality cancel}
\item[(i)]  Let $I = (\eta)$ be principal ideal, let $r(\eta_a)\asymp 
\log$  or $\eta \asymp \eta_a$,  and let $J$ be an arbitrary ideal. 
Then  $J_{a^2} = I_{a^2} \Rightarrow J_a = I_a$.
\item[(ii)] A sufficient condition on a general ideal $I$ for the second order am-equality cancellation implication in (i) to hold for arbitrary ideals $J$, is that every $\xi \in \Sigma(I)$ is dominated by some $\eta \in \Sigma(I)$ for which $r(\eta_a) \asymp \log$ or $\eta \asymp \widehat \eta$.
\eT

\bp
\item[(i)]   Whether  $r(\eta_a)\asymp \log$  or $\eta \asymp \eta_a$, $r(\eta_a)$ is equivalent to a monotone sequence, hence by 
Theorem \ref{T: monotone}(i), $J_a \subset I_a$. 
If $r(\eta_a)\asymp \log$, then by Theorem \ref{T: hat or log}(i) we can also conclude that $J_a \supset I_a$. 
If $\eta \asymp \eta_a$, i.e., $\eta$ is regular, then $\eta \le \xi_{a^2}$ for some $\xi\in\Sigma(J)$. 
But $\xi_{a^2}\in \Sigma ((\eta)_{a^2}) = \Sigma ((\eta))$, hence $\xi_{a^2}=O(\eta)$. 
Then $\xi_{a^2}\asymp \eta$ and thus $\xi_{a^2}$ is regular. 
It follows that $\xi$ is regular (cfr. Corollary \ref {C: g regular iff a(g)}) and hence $\eta = O(\xi)$. 
Thus $I\subset J$ and in particular, $I_a \subset J_a$.
\item[(ii)]  Follows combining  Theorem \ref{T: monotone}(i) and Theorem \ref{T: hat or log}(ii) and recalling that $\eta \asymp \widehat \eta$ implies that 
$\eta$ is regular and hence that $r(\eta_a)$ is equivalent to a monotone sequence.
\ep

\noindent We do not know if in (ii) the last condition $\eta \asymp \widehat \eta$ can be replaced by the more general condition of regularity, $\eta \asymp \eta_a$.\\

Because of Corollary \ref{C:higher order}, we can extend Theorems \ref{T: monotone},  \ref{T: hat or log}, and  \ref{T: equality cancel} to higher order arithmetic means.

\begin{theorem}\label{T: higher order cancellation}
Let $p \in \mathbb N$. If every $\co*$-sequence in the characteristic set $\Sigma (I)$ of an ideal $I$ is dominated by some $\eta$ in its characteristic set that satisfies the $p^{th}$ order exponential $\Delta_2$-condition $\underset{m} {\sup}\,\frac{m^2(\eta_{a^p})_{m^2}}{m(\eta_{a^p})_m} < \infty$, 
then $J_{a^{p+1}} \supset I_{a^{p+1}}$ implies $J_{a^p} \supset I_{a^p}$,
$J_{a^{p+1}} \subset I_{a^{p+1}}$ implies $J_{a^p} \subset I_{a^p}$, and $J_{a^{p+1}} = I_{a^{p+1}}$ implies $J_{a^p} = I_{a^p}$.
\end{theorem}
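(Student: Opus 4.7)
The plan is to reduce the $p$-th order cancellations to the second-order cancellations of Theorems \ref{T: monotone}, \ref{T: hat or log}, and \ref{T: equality cancel}, using the identity $K_{a^{p+1}} = (K_{a^{p-1}})_{a^2}$ valid for any ideal $K$. Setting $I' := I_{a^{p-1}}$ and $J' := J_{a^{p-1}}$, the three desired statements become exactly the second-order analogues for the pair $(I',J')$: $J'_{a^2} \subset I'_{a^2} \Rightarrow J'_a \subset I'_a$, the reverse inclusion, and the equality cancellation.

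The bridge between the two hypotheses is Proposition \ref{P: exp Delta2 equiv log asymp a2/a} applied to $\eta_{a^{p-1}}$ in place of $\eta$: the $p$-th order exponential $\Delta_2$-condition on $\eta$ is nothing but the first-order exponential $\Delta_2$-condition on $\eta_{a^{p-1}}$, and so by that proposition is equivalent to $r((\eta_{a^{p-1}})_a) = r(\eta_{a^p}) \asymp \log$. To verify the sufficient condition required by the three second-order theorems for $I'$, let $\mu' \in \Sigma(I')$; by the definition of $\Sigma(I_{a^{p-1}})$ there is $\xi \in \Sigma(I)$ and a constant $C>0$ with $\mu' \le C\xi_{a^{p-1}}$. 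Our standing hypothesis on $I$ supplies some $\eta \in \Sigma(I)$ with $\xi \le \eta$ satisfying the $p$-th order exponential $\Delta_2$-condition. Setting $\eta' := C\eta_{a^{p-1}}$, I get $\eta' \in \Sigma(I')$ (characteristic sets absorb positive scalar multiples, since two-sided ideals do), $\mu' \le \eta'$, and, by scale invariance of $r$, $r(\eta'_a) = r(\eta_{a^p}) \asymp \log$.

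Since $r(\eta'_a)$ is equivalent to the monotone sequence $\langle \log n \rangle$, this simultaneously verifies the hypothesis of Theorem \ref{T: monotone}(ii) and activates the ``$r(\eta_a)\asymp\log$'' alternative in Theorems \ref{T: hat or log}(ii) and \ref{T: equality cancel}(ii). Applying these three theorems to $(I',J')$ then yields all three cancellations for $(I,J)$. Corollary \ref{C:higher order} plays the implicit structural role of making this reduction coherent: it ensures that the exponential $\Delta_2$-condition propagates through successive arithmetic mean iterations, so that the shift $\eta \mapsto \eta_{a^{p-1}}$ produces precisely the logarithmic ratio-of-regularity needed to feed into the second-order machinery. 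I do not foresee a serious obstacle beyond the routine bookkeeping across the shift $I \mapsto I_{a^{p-1}}$ and the scale-invariance checks; the entire argument is a single application of each of the three second-order theorems, not a multi-step induction.
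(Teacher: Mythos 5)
Your reduction is correct and is exactly the argument the paper intends but leaves implicit: shift to $I'=I_{a^{p-1}}$, $J'=J_{a^{p-1}}$, observe via Proposition \ref{P: exp Delta2 equiv log asymp a2/a} (applied to $\eta_{a^{p-1}}$) that the $p^{th}$ order exponential $\Delta_2$-condition gives $r(\eta_{a^p})\asymp\log$, and then invoke Theorems \ref{T: monotone}(ii), \ref{T: hat or log}(ii), and \ref{T: equality cancel}(ii) for the pair $(I',J')$, with the scale-invariance and domination bookkeeping handled as you do. One small remark: under the hypothesis as stated (the $p^{th}$ order condition assumed directly), Corollary \ref{C:higher order} is not actually needed in the proof; it matters only if one wants to deduce the cancellations at every order from the first-order exponential $\Delta_2$-condition.
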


\end{document}